\begin{document}

\title{A Family of Convex Models to Achieve Fairness through Dispersion Control
}

\titlerunning{Fairness via Dispersion Control}        

\author{Abhay Singh Bhadoriya \and Deepjyoti Deka \and Kaarthik Sundar
}

\authorrunning{A. Bhadoriya \and D. Deka \and K. Sundar} 

\institute{A. Bhadoriya \at
              \email{abhaysbh@amazon.com}           
           \and
           D. Deka \at
              MIT Energy Institute, MIT, Cambridge, MA.
              \email{deepj87@mit.edu} 
            \and 
          K. Sundar \at 
          Los Alamos National Laboratory, Los Alamos, NM.
          \email{kaarthik@lanl.gov}
}

\date{Received: date / Accepted: date}

\maketitle

\begin{abstract}
Controlling the dispersion of a subset of decision variables in an optimization problem is crucial for enforcing fairness or load-balancing across a wide range of applications. Building on the well-known equivalence of finite-dimensional norms, the article develops a family of parameterized convex models that regulate the dispersion of a vector of decision-variable values through its coefficient of variation. Each model has a single parameter taking values in the interval $[0,1]$. When the parameter is set to zero, the model imposes only a trivial constraint on the optimization problem; when set to one, it enforces equality of all the decision variables. As the parameter varies, the coefficient of variation is provably bounded above by a monotonic function of that parameter. The article also presents theoretical results relating the space of feasible solutions across all models. 
Finally, it compares the models' solution quality on a variant of the assignment problem that regulates the dispersion in the assignment costs.
\keywords{Dispersion control \and Fairness \and Convex optimization \and Parametric family}

\subclass{90C05 \and 90C25 \and 91B32 \and 90B50}
\end{abstract}

\section{Introduction}
\label{sec:introduction}
Fairness has deep roots in game theory, where foundational bargaining solutions were rigorously characterized via axiomatic frameworks. Nash's seminal work formalized the bargaining problem through a set of axioms--Pareto efficiency, symmetry, invariance to affine transformations, and independence of irrelevant alternatives--which uniquely identify the Nash bargaining solution \cite{nash1950bargaining}. Subsequent refinements, such as the Kalai-Smorodinsky solution, altered these axioms to prioritize proportionality among agents \cite{kalai1977proportional}. In economics and communication theory \cite{xinying2023guide,shakkottai2008network}, these game-theoretic fairness axioms guide the design of utility functions so that decentralized, self-interested agents naturally converge to equitable outcomes. By carefully embedding fairness criteria into each agent's payoff, one ensures that the collective solution respects core notions of equity and parity, even when agents act independently.

However, in many application domains, problems are centrally optimized: a single decision maker minimizes a global cost or maximizes overall efficiency while imposing load-balancing requirements to distribute burdens--battery discharge, travel distance, computational load \cite{ghomi2017load,janiar2021deep,bhadoriya2024equitable}--uniformly across agents. Here, ``fairness'' is more accurately understood as dispersion control, in which the vector of decision-variable values must exhibit limited spread under a centralized objective.
In contemporary optimization, efforts have been made to extend beyond achieving minimal cost or maximal utility by explicitly regulating the dispersion of selected decision variables. By bounding or penalizing the spread through metrics such as variance, range, coefficient of variation, or more sophisticated equity indices \cite{xinying2023guide}, one can embed fairness or load-balancing constraints directly into existing optimization models. However, many dispersion metrics do not admit convex reformulations; directly bounding them thus renders the optimization problem non‑convex and computationally challenging \cite{xinying2023guide}. Nevertheless, the dispersion‑control paradigm is ubiquitous: ensuring equitable service levels in cloud-computing clusters \cite{ghomi2017load}, allocating shared spectrum in wireless networks \cite{matyjas2016spectrum}, operating power grids during extreme events \cite{sundar2025parametric}, among others. Controlling dispersion thus provides a unifying formalism that couples classical efficiency criteria with societally critical notions of parity, equity, and fairness.

\subsection{Literature Review} \label{subsec:lit-review}
In the literature, two principal modeling paradigms have emerged for incorporating fairness into decision-making problems that traditionally focus on maximizing efficiency.  

The first paradigm replaces the efficiency objective with an explicitly fairness-oriented objective--for example, the Rawlsian max–min objective \cite{rawls2017theory}, $\alpha$-fairness \cite{mo2002fair} (a concave objective with $\alpha$ as a parameter), proportional fairness \cite{shakkottai2008network} (a nonlinear objective), or $\ell_p$-norm dispersion objectives \cite{bektacs2020using} (a convex objective with $p$ as a parameter). In this framework, parity is pursued as the primary goal. However, a well-documented limitation \cite{sundar2025parametric} is that the parameters--$\alpha$ in the case of $\alpha$-fairness and $p$ in the case of $\ell_p$-norm objectives--do not have a direct or monotonic relationship with the fairness level achieved in the optimal solution. In other words, adjusting these parameters does not necessarily increase or decrease fairness.  

Fairness levels are typically assessed a posteriori using indices such as the Gini coefficient \cite{gini1936measure}, Atkinson index \cite{atkinson1970measurement}, Hoover index \cite{long1997hoover}, and Jain's index \cite{jain1984quantitative}, etc. A substantial body of work has sought to enforce fairness by directly minimizing or maximizing such indices, or by reducing dispersion measures such as the coefficient of variation or the range of decision variables \cite{xinying2023guide}. These approaches, however, generally suffer from one of two drawbacks: (i) the resulting model is non-convex, or (ii) the model produces a single fair solution that compromises efficiency--an outcome often unacceptable in applications where cost considerations are critical. Despite these issues, this paradigm remains widely used, given the limited availability of alternative frameworks for fairness in centralized optimization.  

The second, more recent paradigm--which motivates the present study--retains an efficiency-centric objective while constraining dispersion. Examples include the $\varepsilon$-fairness second-order cone approach \cite{sundar2025parametric} or the imposition of explicit upper bounds on fairness indices such as the Gini coefficient \cite{bektacs2020using},  Jain’s index, coefficient of variation, or range \cite{xinying2023guide}. Here, fairness is modeled by enforcing a lower (or upper) bound on the minimum (or maximum) acceptable fairness level while preserving the efficiency objective. This approach does not always yield a convex optimization problem, as noted in \cite{xinying2023guide}, and even when convexity is preserved, computing a suitable lower bound for fairness often requires trial-and-error. Two notable exceptions are the $\varepsilon$-fairness second-order cone method \cite{sundar2025parametric}, which achieves this using a single second-order cone constraint, and the direct bounding of the Gini coefficient \cite{bhadoriya2024equitable}, which reformulates the problem into a set of linear inequalities at the cost of introducing $O(n^2)$ auxiliary variables. For a comprehensive review of both paradigms, see the survey in \cite{xinying2023guide}. This article examines $\varepsilon$-fairness and extends it to a family of convex models. The appeal of $\varepsilon$-fairness lies in its desirable properties: it enforces fairness through a single parametric convex constraint with $\varepsilon \in [0,1]$ as the parameter, and the parameter $\varepsilon$ is monotonic in the level of fairness. Moreover, it naturally facilitates the study of the Pareto front of optimal solutions balancing efficiency and fairness in a bi-objective setting \cite{bhadoriya2024equitable}.

We now present the main contributions of this article in the context of this literature.  

\subsection{Contributions} \label{subsec:contributions}
The contributions of the article are as follows: we extend the recently introduced model of $\varepsilon$-fairness \cite{sundar2025parametric}--that controls the dispersion of decision variables via its coefficient of variation \cite{everitt2010cambridge}--to a countably infinite family of convex models. Each model is derived from the first principles through the classical equivalence of finite-dimensional norms \cite{halmos2017finite}, namely between the $\ell_p$-norm and the $\ell_1$-norm for any $p \geqslant 2$. All models are governed by a single parameter, $\varepsilon \in [0,1]$: when $\varepsilon = 0$, no constraint on the dispersion is imposed, and when $\varepsilon = 1$, the constraint forces the dispersion to be zero. 
In particular, for $p = 2$, the constraint reduces to a second-order cone \cite{sundar2025parametric}; for $p = \infty$, it reduces to a set of linear inequalities; and for $p \in (2, \infty)$, it reduces to the $\ell_p$-norm cone. In addition, we present theoretical results that characterize the relationships among these convex models for all values of $\varepsilon$, including their relative strength, inclusion properties, and points of equivalence.
Finally, we compare and contrast the models in terms of computational time and their effectiveness on a variant of the assignment problem in which the dispersion of assignment costs across agents is controlled. 


\section{Model Development} \label{sec:model}
We begin by introducing notation and recalling a classical result on the equivalence of norms in finite-dimensional spaces \cite{halmos2017finite}.  Let $\bm x \in \mathbb{R}^n_{\geqslant 0}$ denote an $n$-dimensional non-negative vector, whose dispersion we wish to regulate.  For any integer $p\geqslant 1$, define its $\ell^p$‐norm by
\begin{gather}
    \|\bm x\|_p \;=\;\Bigl(\sum_{i=1}^n x_i^p\Bigr)^{1/p}, \label{eq:p-norm}
\end{gather}
Here we omit the absolute value in \eqref{eq:p-norm} 
since $x_i \geqslant 0$ for all~$i$. Also define
\begin{gather}
    D_p \;\triangleq\; n^{1-1/p} - 1. \label{eq:D_r}
\end{gather}
Then, for any pair of integers $p_1$ and $p_2$, such that $0 < p_1 < p_2$, the norm‐equivalence result \cite{halmos2017finite} states
\begin{gather}
    \mathcal{NE}(p_1,p_2): \;\;\|\bm x\|_{p_2} \;\leqslant\; \|\bm x\|_{p_1} 
    \;\leqslant\;\left(\frac{D_{p_2} + 1}{D_{p_1} + 1}\right)\,\|\bm x\|_{p_2}. \label{eq:equivalence}
\end{gather}
Specializing to the case $(p_1,p_2)=(1,p)$ with $p\geqslant 2$, we obtain
\begin{gather}
    \mathcal{NE}(1,p): \;\;\|\bm x\|_p \;\leqslant\; \|\bm x\|_1 
    \;\leqslant\;(D_p + 1)\,\|\bm x\|_p. \label{eq:ne-1p}
\end{gather}
Moreover:
\begin{itemize}
  \item The lower bound $\|\bm x\|_p = \|\bm x\|_1$ is attained if and only if all but one component of $\bm x$ are zero (the most \emph{unfair} distribution of $\bm x$).
  \item The upper bound $\|\bm x\|_1 = (D_p + 1)\,\|\bm x\|_p$ is attained if and only if all components of $\bm x$ are equal (the most \emph{fair} distribution of $\bm x$).
\end{itemize}
This intuition naturally leads to the subsequent Definition \ref{defn:ep-fairness}, extending the concept of $\varepsilon$-fairness in \cite{sundar2025parametric}. To keep the presentation self-contained, we first recall the notion of $\varepsilon$-fairness introduced in \cite{sundar2025parametric}. To that end, for any $\bm x \in \mathbb{R}^n_{\geqslant 0}$ and $\varepsilon \in [0,1]$, $\bm x$ is said to be \emph{$\varepsilon$-fair} if it satisfies
\begin{gather}
(1 + \varepsilon D_2)\,\|\bm x\|_2 = \|\bm x\|_1 \label{eq:eps-fair}
\end{gather}
and $\bm x$ is \emph{at least $\varepsilon$-fair} if 
\begin{gather}
(1 + \varepsilon D_2)\,\|\bm x\|_2 \leqslant \|\bm x\|_1. \label{eq:eps-fair-soc}
\end{gather}
The constraint \eqref{eq:eps-fair-soc} is a second-order cone with larger values of $\varepsilon$ corresponding to greater fairness. The following Definition \ref{defn:ep-fairness} generalizes the above construction by replacing the $\ell_2$-norm with $\ell_p$-norms for any $p \geqslant 2$.
\begin{definition} \label{defn:ep-fairness}
$(\varepsilon,p)$‐fairness -- For any integer $p\geqslant 2$, $\bm x \in \mathbb{R}^n_{\geqslant 0}$ and $\varepsilon\in[0,1]$, we say that $\bm x$ is \emph{$(\varepsilon,p)$‐fair} if
\begin{gather*}
    \bigl(1 + \varepsilon\,D_p\bigr)\,\|\bm x\|_p \;=\;\|\bm x\|_1.
\end{gather*}
Furthermore, we say that $\bm x$ is \emph{at least $(\varepsilon,p)$‐fair} if
\begin{gather}
    \bigl(1 + \varepsilon\,D_p\bigr)\,\|\bm x\|_p 
    \;\leqslant\;\|\bm x\|_1. \label{eq:e-fair}
\end{gather}
Additionally, we denote the set of at least $(\varepsilon,p)$-fair vectors as
\begin{gather}
        \mathcal X(\varepsilon, p)
    \;\triangleq\;
    \bigl\{\bm x \in \mathbb{R}_{\geqslant 0}^n : 
    (1 + \varepsilon\,D_p)\,\|\bm x\|_p \leqslant \|\bm x\|_1 \bigr\}. \label{eq:X}
\end{gather}
\end{definition}
In the above definition, the factor $(1 + \varepsilon D_p)$ can be interpreted as the convex combination of $1$ and $(D_p + 1)$, with $\varepsilon$ as the non-negative multiplier. Furthermore, the constraint in \eqref{eq:e-fair} is convex for every $p \geqslant 2$.  In particular, for $p = 2$, \eqref{eq:e-fair} reduces to a second‐order cone \cite{sundar2025parametric}; for $2 < p < \infty$, it defines an $\ell_p$‐norm cone, and for $p = \infty$, it reduces to a collection of linear inequalities as follows:
\begin{flalign}
    \|\bm x\|_1 \;\geqslant\;(1+\varepsilon D_\infty)\,\|\bm x\|_\infty  \iff
    \sum_{i=1}^n x_i \;\geqslant\;\left(1+n\varepsilon -\varepsilon\right)\cdot x_i
    \quad\forall\,i.
     \label{eq:linear}
\end{flalign}
For any fixed $\varepsilon$, the Definition~\ref{defn:ep-fairness} yields a countably infinite family of convex models (by varying the integer $p \geqslant 2$) for enforcing fairness in the distribution of~$\bm x$.
\begin{remark}[Connection to $\varepsilon$-fairness]
For $p = 2$, Definition \ref{defn:ep-fairness} reduces exactly to the $\varepsilon$-fairness model introduced in \cite{sundar2025parametric}.  Thus, the $(\varepsilon,p)$-fairness framework can be viewed as a direct generalization of $\varepsilon$-fairness to a family of models indexed by $p \geqslant 2$.  While the case $p=2$ yields a second-order cone constraint, the general formulation allows for different convex representations: an $\ell_p$-norm cone for $2 < p < \infty$ and a set of linear inequalities for $p = \infty$. This generalization preserves convexity while enabling a broader range of modeling and computational trade-offs.
\end{remark}
We now present a proposition that directly follows from the definition of $\mathcal X(\varepsilon, p)$ in \eqref{eq:X}. 
\begin{proposition} \label{prop:eps-varying}
    For any integer $p \geqslant 2$ and and $\varepsilon_1, \varepsilon_2 \in [0, 1]$ such that $\varepsilon_1 > \varepsilon_2$, $\mathcal X( \varepsilon_1,p) \subset \mathcal X( \varepsilon_2,p)$. \hfill \qed
\end{proposition}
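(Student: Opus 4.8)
The plan is to split the claimed strict inclusion into two parts: the containment $\mathcal{X}(\varepsilon_1,p)\subseteq\mathcal{X}(\varepsilon_2,p)$, which is a one-line monotonicity argument, and the existence of a separating vector in $\mathcal{X}(\varepsilon_2,p)\setminus\mathcal{X}(\varepsilon_1,p)$, which is where the (modest) work lies.

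\emph{Containment.} First I would record that $D_p = n^{1-1/p} - 1 \geqslant 0$ for every integer $p \geqslant 2$ and every $n \geqslant 1$, and that $D_p > 0$ as soon as $n \geqslant 2$. Since $D_p \geqslant 0$ and $\varepsilon_1 > \varepsilon_2$, the scalar factor obeys $1 + \varepsilon_2 D_p \leqslant 1 + \varepsilon_1 D_p$. Hence, for any $\bm x \in \mathcal{X}(\varepsilon_1,p)$, using $\|\bm x\|_p \geqslant 0$ together with the defining inequality \eqref{eq:e-fair} at $\varepsilon_1$, one gets $(1 + \varepsilon_2 D_p)\|\bm x\|_p \leqslant (1 + \varepsilon_1 D_p)\|\bm x\|_p \leqslant \|\bm x\|_1$, so $\bm x \in \mathcal{X}(\varepsilon_2,p)$.

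\emph{Strictness.} Here I would assume $n \geqslant 2$ (so that $D_p > 0$; when $n = 1$ all the models coincide with the trivial constraint and no strict inclusion can hold, so presumably $n \geqslant 2$ is implicit throughout the note). I would then exhibit a witness by a continuity argument along a path from the ``most unfair'' to the ``most fair'' vector, as flagged by the bullet points after \eqref{eq:ne-1p}. Consider $\bm x(t) = (1, t, \dots, t) \in \mathbb{R}^n_{\geqslant 0}$ for $t \in [0,1]$, and set $f(t) = \|\bm x(t)\|_1 / \|\bm x(t)\|_p = (1 + (n-1)t)\,(1 + (n-1)t^p)^{-1/p}$. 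Then $f$ is continuous on $[0,1]$ with $f(0) = 1$ and $f(1) = n\,n^{-1/p} = n^{1-1/p} = D_p + 1$, while $\varepsilon_2 \in [0,1]$ gives $1 + \varepsilon_2 D_p \in [1, D_p + 1]$. By the intermediate value theorem there is $t^\star \in [0,1]$ with $f(t^\star) = 1 + \varepsilon_2 D_p$; the vector $\bm x^\star \triangleq \bm x(t^\star)$ then satisfies \eqref{eq:e-fair} with equality at $\varepsilon_2$, hence $\bm x^\star \in \mathcal{X}(\varepsilon_2,p)$. On the other hand $(1 + \varepsilon_1 D_p)\|\bm x^\star\|_p = \tfrac{1 + \varepsilon_1 D_p}{1 + \varepsilon_2 D_p}\,\|\bm x^\star\|_1 > \|\bm x^\star\|_1$, because $\varepsilon_1 > \varepsilon_2$ and $D_p > 0$ make the fraction strictly exceed $1$; hence $\bm x^\star \notin \mathcal{X}(\varepsilon_1,p)$. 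Combined with the containment this yields $\mathcal{X}(\varepsilon_1,p) \subset \mathcal{X}(\varepsilon_2,p)$.

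\emph{Difficulty.} The containment step is essentially free, being just monotonicity of $\varepsilon \mapsto 1 + \varepsilon D_p$ together with nonnegativity of norms. The only real content is producing the separating vector for an arbitrary $\varepsilon_2 \in [0,1]$; I expect the intermediate value theorem applied to the explicit one-parameter family above to dispose of it cleanly, the main points to watch being the degenerate case $n = 1$ and --- if one prefers to avoid invoking the intermediate value theorem when $\varepsilon_2 = 0$ --- simply taking $\bm x^\star = (1, 0, \dots, 0)$, for which $\|\bm x^\star\|_p = \|\bm x^\star\|_1 = 1$, giving $\bm x^\star \in \mathcal{X}(0,p) \setminus \mathcal{X}(\varepsilon_1,p)$ whenever $\varepsilon_1 > 0$.
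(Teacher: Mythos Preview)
Your proof is correct. The paper, however, gives no proof at all: Proposition~\ref{prop:eps-varying} is stated with a terminal \qed\ and prefaced only by the remark that it ``directly follows from the definition of $\mathcal X(\varepsilon,p)$.'' So the paper treats the entire claim---including strictness---as immediate, whereas you supply the details.

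Your containment step is exactly the monotonicity-in-$\varepsilon$ observation the paper has in mind. The strictness step is the part the paper suppresses; your IVT construction along the path $\bm x(t)=(1,t,\dots,t)$ is a clean way to produce a vector that is exactly $(\varepsilon_2,p)$-fair and hence fails the $(\varepsilon_1,p)$ test. Your caveats about $n\geqslant 2$ (needed so that $D_p>0$ and strictness is even possible) and the $\varepsilon_2=0$ endpoint are apt; the paper tacitly assumes $n\geqslant 2$ throughout. In short, you have not taken a different route so much as written out the argument the paper leaves to the reader, and your treatment of strictness is more careful than the paper's one-line dismissal.
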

The following normalization argument to simplifies notation for the main theoretical results in the subsequent section.  
\begin{remark} \label{rem:norm}
{\it Normalization} -- Since all finite-dimensional norms are positively homogeneous of degree 1, the inequality in \eqref{eq:X}, i.e., 
\begin{gather*}
    \bigl(1 + \varepsilon\,D_p\bigr)\,\|\bm x\|_p 
    \;\leqslant\;\|\bm x\|_1
\end{gather*}
is invariant under scaling of $\bm x$.  Thus, it is convenient to normalize any non-zero vector\footnote{The zero vector, $\bm 0$, cannot be normalized and is trivially $(\varepsilon,p)$-fair for any $p \geqslant 2$ and $\varepsilon \in [0, 1]$. Hence, throughout the rest of the article, we assume that $\bm x$ is always a non-zero vector.} by its $\ell_1$-norm.  To that end, define
\begin{gather}
  \Delta_n \;\triangleq\;\left\{\bm x\in\mathbb R_{\geqslant 0}^n : \sum_{i=1}^n x_i = 1\right\},
  \label{eq:prob-simplex}
\end{gather}
the standard probability simplex in $\mathbb R^n$.  Normalizing any $\bm x \in \mathcal X(\varepsilon, p)$ with respect to its $\ell_1$-norm then yields  
\begin{gather}
  \mathcal Y(\varepsilon,p)
  \;\triangleq\;
  \left\{\bm x\in\Delta_n : (1+\varepsilon\,D_p)\,\|\bm x\|_p \leqslant 1\right\}.
  \label{eq:X1}
\end{gather}
Observe that $\mathcal Y(\varepsilon,p)$ is exactly the image of $\mathcal X(\varepsilon,p)$ under the map $\bm x\mapsto\bm x/(\bm e^\intercal\bm x)$; here $\bm e$ is a vector of 1s. Hence, any theoretical result for $\mathcal X(\varepsilon,p)$ can be transferred immediately to $\mathcal Y(\varepsilon,p)$ and vice-versa. All theoretical properties of $(\varepsilon, p)$-fairness in the subsequent section will be proved for the normalized set $\mathcal Y(\varepsilon, p)$. 
\end{remark}

\section{Theoretical Properties} \label{sec:theory}
The results in this section establish the validity and structure of the proposed $(\varepsilon,p)$-fairness models from three complementary perspectives. First, the Section \ref{subsec:cv} shows that the constraint has a clear statistical interpretation by deriving an explicit upper bound on the coefficient of variation, thereby justifying its role as a dispersion-control (fairness) mechanism. Next, the Section \ref{subsec:equiv} characterizes the behavior at the boundary values $\varepsilon \in \{0,1\}$, showing that all models in the family coincide at these extremes. Finally, the Section \ref{subsec:strict} focuses on the interior regime $\varepsilon \in (0,1)$ and establishes a strict inclusion relationship across different values of $p$, revealing how the choice of $p$ affects the feasible region. Together, these results provide a coherent picture of how the parameter $\varepsilon$ controls dispersion and how the family of models is structured across different values of $p$.

We first establish the relationship between the $(\varepsilon,p)$-fairness of Definition~\ref{defn:ep-fairness} and the coefficient of variation of the vector~$\bm x$. The coefficient of variation of $\bm x$ is a statistical measure of the dispersion of $\bm x$. The result in the subsequent Section \ref{subsec:cv} provides the intuition for how varying $\varepsilon$ in \eqref{eq:e-fair} for any fixed integer $p \geqslant 2$ allows the control of dispersion of $\bm x$.

\subsection{Bound of Coefficient of Variation} \label{subsec:cv}
The first property we establish is that the $(\varepsilon,p)$‐fairness constraint \eqref{eq:e-fair} implies an upper bound on the coefficient of variation of $\bm x$, and that this bound tightens as $\varepsilon$ increases.
\begin{proposition} \label{prop:cv-bound}
For any $\varepsilon \in [0, 1]$, an integer $p \geqslant 2$, and $\bm x \in \mathcal Y(\varepsilon, p)$ defined in \eqref{eq:X1}, the coefficient of variation of $\bm x$
\begin{gather*}
    \mathrm{CV}(\bm x)
    \triangleq \frac{\sqrt{\frac{1}{n}\sum_{i=1}^n (x_i - \mu)^2}}{\mu},
    \quad \text{ where } \quad
    \mu = \frac{1}{n},
\end{gather*}
satisfies the following bound:
\begin{gather}
    \left(\mathrm{CV}(\bm x)\right)^2
    \;\leqslant\;
    B_p(\varepsilon)
    \;\triangleq\;
    \frac{(D_p+1)^2}{(1 + \varepsilon\,D_p)^2} - 1\,.
    \label{eq:cv-bound-p}
\end{gather}
Moreover, $B_p(\varepsilon)$ is strictly decreasing in $\varepsilon$ on $[0,1]$, with  $B_p(0)=(D_p+1)^2-1$ and $B_p(1)=0$.
\end{proposition}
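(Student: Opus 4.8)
The plan is to reduce the coefficient of variation to the Euclidean norm of $\bm x$, bound that norm by combining the norm-equivalence result $\mathcal{NE}(2,p)$ with the defining inequality of $\mathcal Y(\varepsilon,p)$, and then read off the monotonicity of $B_p$ by inspection. First I would rewrite $\mathrm{CV}(\bm x)^2$: since $\bm x\in\Delta_n$ we have $\sum_i x_i=1$ and $\mu=1/n$, so expanding the square gives $\frac1n\sum_i (x_i-1/n)^2=\frac1n\bigl(\|\bm x\|_2^2-1/n\bigr)$, and dividing by $\mu^2=1/n^2$ produces the clean identity $\mathrm{CV}(\bm x)^2=n\|\bm x\|_2^2-1$.

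Next I would bound $\|\bm x\|_2$. Applying $\mathcal{NE}(2,p)$ from \eqref{eq:equivalence} with $(p_1,p_2)=(2,p)$ (the case $p=2$ being trivial, as the equivalence constant is then $1$) yields $\|\bm x\|_2\leqslant\frac{D_p+1}{D_2+1}\,\|\bm x\|_p$. Since $D_2+1=n^{1/2}=\sqrt n$ and membership in $\mathcal Y(\varepsilon,p)$ forces $\|\bm x\|_p\leqslant (1+\varepsilon D_p)^{-1}$, this gives $\|\bm x\|_2\leqslant\frac{D_p+1}{\sqrt n\,(1+\varepsilon D_p)}$. Squaring and substituting into the identity from the previous paragraph cancels the factor $n$ and leaves exactly $\mathrm{CV}(\bm x)^2\leqslant\frac{(D_p+1)^2}{(1+\varepsilon D_p)^2}-1=B_p(\varepsilon)$, which is \eqref{eq:cv-bound-p}.

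Finally, the monotonicity statement is immediate. For $n\geqslant 2$ and $p\geqslant 2$ we have $D_p=n^{1-1/p}-1>0$, so $\varepsilon\mapsto(1+\varepsilon D_p)^2$ is strictly increasing on $[0,1]$, hence $B_p$ is strictly decreasing there; evaluating at the endpoints gives $B_p(0)=(D_p+1)^2-1$ and $B_p(1)=\frac{(D_p+1)^2}{(D_p+1)^2}-1=0$. (When $n=1$ the statement degenerates, since then $D_p=0$, $B_p\equiv 0$, and $\mathrm{CV}(\bm x)\equiv 0$; I would either assume $n\geqslant 2$ or note this case explicitly.)

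I do not expect a genuine obstacle here; the one point requiring care is recognizing that the relevant instance of the norm equivalence is $\mathcal{NE}(2,p)$ rather than $\mathcal{NE}(1,p)$ — what is needed is an \emph{upper} bound on $\|\bm x\|_2$ in terms of $\|\bm x\|_p$, and it is precisely the upper half of $\mathcal{NE}(2,p)$, with constant $(D_p+1)/\sqrt n$, that both supplies this bound and makes the factors of $n$ cancel against the $n$ appearing in $\mathrm{CV}(\bm x)^2$.
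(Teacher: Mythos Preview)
Your proposal is correct and follows essentially the same route as the paper: both reduce $\mathrm{CV}(\bm x)^2$ to $n\|\bm x\|_2^2-1$, bound $\|\bm x\|_2$ via $\mathcal{NE}(2,p)$ combined with the defining inequality of $\mathcal Y(\varepsilon,p)$, and then verify monotonicity of $B_p$. The only cosmetic differences are the order of the steps and that the paper computes $B_p'(\varepsilon)$ explicitly whereas you argue monotonicity by inspection of the denominator; your added remark on the degenerate case $n=1$ is a small bonus.
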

\begin{proof}
For any $p \geqslant 2$, using norm equivalence $\mathcal{NE}(2, p)$ in \eqref{eq:equivalence} and \eqref{eq:X1}, we have
\begin{gather}
    \|\bm x\|_2 \;\leqslant\;\frac{D_p + 1}{\sqrt n}\|\bm x\|_p \;\leqslant \;\frac{D_p + 1}{\sqrt n}\left(\frac{1}{1 + \varepsilon\,D_p}\right). \label{eq:2-1-bound}
\end{gather}
On the other hand, since $n\mu = 1$ and observing that
\begin{gather*}
    \frac{1}{n}\sum_{i=1}^n x_i^2
    = \frac{\|\bm x\|_2^2}{n},
    \quad
    \frac{1}{n}\sum_{i=1}^n (x_i-\mu)^2
    = \frac{\|\bm x\|_2^2}{n} - \mu^2,
\end{gather*}
we obtain
\begin{gather*}
    \left(\mathrm{CV}(\bm x)\right)^2
    = \frac{\|\bm x\|_2^2 - n\mu^2}{n\mu^2}
    = n\,\|\bm x\|_2^2 - 1.
\end{gather*}
Substituting the bound on $\|\bm x\|_2$ from \eqref{eq:2-1-bound} yields
\begin{flalign*}
    n\,\|\bm x\|_2^2 - 1
    &\;\leqslant\;
    \frac{(D_p+1)^2}{(1 + \varepsilon\,D_p)^2}
    - 1.
\end{flalign*}
Hence,
\begin{gather*}
    \left(\mathrm{CV}(\bm x)\right)^2
    \;\leqslant\;
    \frac{(D_p+1)^2}{(1 + \varepsilon\,D_p)^2} - 1
    = B_p(\varepsilon).
\end{gather*}
Finally, since $D_p>0$, we have
\begin{gather*}
    B_p'(\varepsilon)
    \;=\;
    -\,\frac{2D_p(D_p+1)^2}{(1 + \varepsilon\,D_p)^3}
    \;<\;0.
\end{gather*}
Hence, $B_p(\varepsilon)$ is strictly decreasing in $\varepsilon$ and $B_p(0)=(D_p+1)^2-1$, $B_p(1)=0$, as claimed.
\end{proof}

The strict monotonicity of $B_p(\varepsilon)$ in $\varepsilon$ implies that, as $\varepsilon$ increases, the upper bound on $\mathrm{CV}(\bm x)$ decreases, thereby decreasing the dispersion of~$\bm x$. This relationship was previously established for the case $p=2$ in \cite{sundar2025parametric}.

In the forthcoming sections, we
present the main theoretical results that concern the equivalence or ordered relation between sets $\mathcal Y(p, \varepsilon)$ and correspondingly of $\mathcal X(p, \varepsilon)$ for varying values of $p$. 

\subsection{Equivalence Condition} \label{subsec:equiv}
In this section, we give conditions under which requiring $\bm x$ to be at least $(\varepsilon,p)$\nobreakdash‑fair is equivalent for every integer $p\geqslant 2$.  
We start with a key result that directly follows from the definition of $(\varepsilon, p)$-fairness. 
\begin{proposition} \label{prop:corner}
    Suppose that $\bm e_i$ is a unit vector with the $i$\textsuperscript{th} coordinate set to $1$ and $\bm e$ is a vector of 1s, then for any integer $p \geqslant 2$, the following statements hold:
    \begin{gather}
        \text{For any } i \in \{1, \dots, n\},~~ \bm e_i \in \mathcal Y(\varepsilon, p) ~\Longrightarrow ~ \varepsilon = 0 \label{eq:lem-a} \\
        \text{ and } \varepsilon = 1 \iff \mathcal Y(\varepsilon, p) = \left \{ \tfrac 1n \bm e \right\}. \label{eq:lem-b} 
    \end{gather}
\end{proposition}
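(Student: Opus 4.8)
The plan is to derive both parts directly from Definition~\ref{defn:ep-fairness} together with the two equality characterizations of the norm-equivalence inequality $\mathcal{NE}(1,p)$ recorded just before that definition; the argument is elementary throughout, the only step requiring a little care being a continuity/perturbation argument for one implication in \eqref{eq:lem-b}. Throughout I would assume $n\geqslant 2$, so that $D_p = n^{1-1/p}-1>0$ for every integer $p\geqslant 2$; the case $n=1$ is degenerate since $\Delta_n$ is then already a singleton.

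For \eqref{eq:lem-a}, I would simply substitute $\bm x=\bm e_i$ into the membership condition of $\mathcal Y(\varepsilon,p)$ in \eqref{eq:X1}. Since $\bm e_i\in\Delta_n$ and $\|\bm e_i\|_p=\|\bm e_i\|_1=1$ for every $p$, the condition $(1+\varepsilon D_p)\|\bm e_i\|_p\leqslant 1$ becomes $1+\varepsilon D_p\leqslant 1$, i.e.\ $\varepsilon D_p\leqslant 0$. As $D_p>0$ and $\varepsilon\in[0,1]$, this forces $\varepsilon=0$. This also matches the remark after \eqref{eq:ne-1p} that the lower bound $\|\bm x\|_p=\|\bm x\|_1$ is attained precisely at such one-hot vectors.

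For the forward implication of \eqref{eq:lem-b}, I would fix $\varepsilon=1$ and take any $\bm x\in\mathcal Y(1,p)$, so that $\bm x\in\Delta_n$ and $(D_p+1)\|\bm x\|_p\leqslant\|\bm x\|_1$. Combining this with the upper bound in $\mathcal{NE}(1,p)$, namely $\|\bm x\|_1\leqslant(D_p+1)\|\bm x\|_p$, sandwiches the two quantities and yields $\|\bm x\|_1=(D_p+1)\|\bm x\|_p$. By the equality characterization stated after \eqref{eq:ne-1p}, all components of $\bm x$ are equal, and since $\bm x\in\Delta_n$ this gives $\bm x=\tfrac1n\bm e$. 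Conversely, $\tfrac1n\bm e$ satisfies the constraint for every $\varepsilon\in[0,1]$: since $\|\tfrac1n\bm e\|_1=1=(D_p+1)\|\tfrac1n\bm e\|_p$, we get $(1+\varepsilon D_p)\|\tfrac1n\bm e\|_p=\tfrac{1+\varepsilon D_p}{D_p+1}\leqslant 1$. Hence $\mathcal Y(1,p)=\{\tfrac1n\bm e\}$.

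For the reverse implication of \eqref{eq:lem-b} I would argue by contraposition: suppose $\varepsilon<1$. The computation just made gives $(1+\varepsilon D_p)\|\tfrac1n\bm e\|_p=\tfrac{1+\varepsilon D_p}{D_p+1}<1$, a strict inequality. Consider the segment $\bm x(t)=\tfrac1n\bm e+t(\bm e_1-\bm e_2)$, which stays in $\Delta_n$ for all $t\in[0,\tfrac1n]$ and reduces to $\tfrac1n\bm e$ at $t=0$; since $t\mapsto(1+\varepsilon D_p)\|\bm x(t)\|_p$ is continuous, it remains strictly below $1$ for all sufficiently small $t>0$, so $\bm x(t)\in\mathcal Y(\varepsilon,p)$ while $\bm x(t)\neq\tfrac1n\bm e$, and therefore $\mathcal Y(\varepsilon,p)\neq\{\tfrac1n\bm e\}$. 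Equivalently, one may invoke continuity of $\bm x\mapsto(1+\varepsilon D_p)\|\bm x\|_p$ on the $(n-1)$-dimensional simplex to conclude that a whole relative neighbourhood of $\tfrac1n\bm e$ lies in $\mathcal Y(\varepsilon,p)$. The only genuine obstacle is making this last step watertight — checking that the perturbation stays in the nonnegative orthant and that strictness of the inequality at $t=0$ persists under the perturbation — but this is routine once $n\geqslant 2$ is assumed.
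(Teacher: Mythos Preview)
Your argument is correct and rests on exactly the same ingredients the paper invokes: the equality cases of $\mathcal{NE}(1,p)$ and the reading of $1+\varepsilon D_p$ as the convex combination of $1$ and $D_p+1$. The paper's own proof is a single sentence pointing to that convex-combination observation and leaves the details to the reader; you have simply written those details out. The one place where you do more than the paper is the reverse implication in \eqref{eq:lem-b}: the paper does not explicitly argue that $\mathcal Y(\varepsilon,p)=\{\tfrac1n\bm e\}$ forces $\varepsilon=1$, whereas you supply a clean contraposition via the strict inequality $(1+\varepsilon D_p)\|\tfrac1n\bm e\|_p<1$ and a continuity/perturbation along $\bm x(t)=\tfrac1n\bm e+t(\bm e_1-\bm e_2)$. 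That extra step is sound (and the assumption $n\geqslant 2$ you flag is indeed needed for $D_p>0$), so your write-up is a strictly more complete version of the paper's sketch rather than a different approach.
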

\begin{proof}
    The proof follows from the observation in Section \ref{sec:model} that the term $(1 + \varepsilon D_p)$ is the convex combination of the two scalars $1$ and $(D_p + 1)$ with $\varepsilon$ as the factor.
\end{proof}
\begin{theorem} \label{thm:equiv}
For any integer $p \geqslant 2$, and $\varepsilon\in\{0,1\}$, the sets of at least $(\varepsilon,p)$-fair vectors are equivalent, i.e.
\[
    \mathcal Y(\varepsilon,p)
    = \mathcal Y(\varepsilon,2)
    \quad\forall~ p > 2.
\]
\end{theorem}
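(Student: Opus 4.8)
The plan is to treat the two values $\varepsilon\in\{0,1\}$ separately, since for each of them the set $\mathcal Y(\varepsilon,p)$ collapses to an object that does not depend on $p$ at all; once that is shown, equality with the $p=2$ instance is immediate.

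First, for $\varepsilon=0$ the defining inequality of $\mathcal Y(0,p)$ in \eqref{eq:X1} reduces to $\|\bm x\|_p\leqslant 1$. Every $\bm x\in\Delta_n$ has $\|\bm x\|_1=1$ and nonnegative entries, so by the lower bound in $\mathcal{NE}(1,p)$ of \eqref{eq:ne-1p} (equivalently, because $x_i^p\leqslant x_i$ whenever $x_i\in[0,1]$) we get $\|\bm x\|_p\leqslant\|\bm x\|_1=1$ automatically. Hence $\mathcal Y(0,p)=\Delta_n$ for every integer $p\geqslant 2$; in particular $\mathcal Y(0,p)=\Delta_n=\mathcal Y(0,2)$ for all $p>2$.

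Second, for $\varepsilon=1$ the defining inequality becomes $(1+D_p)\,\|\bm x\|_p\leqslant 1=\|\bm x\|_1$ for $\bm x\in\Delta_n$. Combined with the upper bound $\|\bm x\|_1\leqslant(D_p+1)\,\|\bm x\|_p$ from $\mathcal{NE}(1,p)$, this forces $\|\bm x\|_1=(D_p+1)\,\|\bm x\|_p$, which by the attainment condition recorded just after \eqref{eq:ne-1p} holds if and only if all components of $\bm x$ are equal; on $\Delta_n$ this means $\bm x=\tfrac1n\bm e$. (Alternatively, this is precisely equation~\eqref{eq:lem-b} of Proposition~\ref{prop:corner}.) Thus $\mathcal Y(1,p)=\{\tfrac1n\bm e\}$ for every $p\geqslant 2$, and in particular $\mathcal Y(1,p)=\{\tfrac1n\bm e\}=\mathcal Y(1,2)$ for all $p>2$.

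Combining the two cases yields $\mathcal Y(\varepsilon,p)=\mathcal Y(\varepsilon,2)$ for all $\varepsilon\in\{0,1\}$ and all $p>2$, and by the normalization of Remark~\ref{rem:norm} the analogous statement holds for the unnormalized sets $\mathcal X(\varepsilon,p)$. I do not expect a genuine obstacle: the theorem is essentially a corollary of the norm-equivalence bounds \eqref{eq:ne-1p} together with Proposition~\ref{prop:corner}. The only step requiring a line of care is the $\varepsilon=1$ case, where one must observe that the imposed inequality and the opposing norm-equivalence inequality together pin $\bm x$ down to the equality case of $\mathcal{NE}(1,p)$, hence to the uniform vector.
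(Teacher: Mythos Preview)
Your proof is correct and follows essentially the same approach as the paper: treat $\varepsilon=0$ and $\varepsilon=1$ separately, showing $\mathcal Y(0,p)=\Delta_n$ trivially and $\mathcal Y(1,p)=\{\tfrac1n\bm e\}$ via Proposition~\ref{prop:corner} (or, equivalently, the equality case of $\mathcal{NE}(1,p)$). Your write-up is slightly more explicit about why the $\varepsilon=0$ inequality is automatic and about the squeeze argument in the $\varepsilon=1$ case, but the underlying logic is identical.
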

\begin{proof}
We treat the two cases $\varepsilon=0$ and $\varepsilon=1$ separately. When $\varepsilon = 0$, the inequality in \eqref{eq:X1} is trivially satisfied for any $\bm x \in \Delta_n$ and any $p \geqslant 2$. Hence, 
\[
    \mathcal Y(0,p) = \Delta_n = \mathcal Y(0,2) \quad \forall, p > 2.
\]
When $\varepsilon = 1$, Proposition \ref{prop:corner} indicates that  
$\mathcal Y(1,p) = \left\{\tfrac 1n \bm e \right\}$ for every $p \geqslant 2$.
\end{proof}
Theorem \ref{thm:equiv} shows that at the two extreme fairness levels, the feasible set of vectors $\bm x$ is the same regardless of the choice of $p$.

\subsection{Strict Inclusion Property} \label{subsec:strict}
Here, we prove a result that shows a strict inclusion property between $\mathcal Y(\varepsilon, p_1)$ and $\mathcal Y(\varepsilon, p_2)$ for any pair of integers $(p_1, p_2)$ such that $2 \leqslant p_1 < p_2$, and $\varepsilon$ in the open interval $(0, 1)$. We begin by presenting a proposition that will be useful to prove the main theorem.
\begin{proposition} \label{prop:decreasing}
    For any $\bm x \in \Delta_n \setminus \left\{ \bm e_1, \dots, \bm e_n, \tfrac{1}{n}\bm e\right\}$,
    \begin{gather} 
        F(p) \triangleq \frac{1-\|\bm x\|_p}{D_p \|\bm x\|_p}  \text{ is strictly decreasing in } p \geqslant 2.
    \end{gather}
\end{proposition}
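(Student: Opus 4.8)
The plan is to rewrite $F(p)$ so that its entire dependence on $p$ is carried by two quantities that are individually monotone in $p$, and then to combine their effects; this sidesteps any direct estimation of $F'(p)$. Since $\bm x\in\Delta_n$ we have $\|\bm x\|_1=1$, so dividing the numerator and denominator of $F(p)$ by $\|\bm x\|_p$ gives $F(p)=\dfrac{\rho_p-1}{D_p}$ with $\rho_p:=1/\|\bm x\|_p$. Exploiting the identity $1+D_p=n^{1-1/p}$, I would then set
\[
\theta_p\;:=\;\frac{\ln\rho_p}{\ln(1+D_p)}\;=\;\frac{-\ln\|\bm x\|_p}{(1-1/p)\ln n}\;=\;\frac{-\ln\sum_{i}x_i^{p}}{(p-1)\ln n},
\]
so that $\rho_p=(1+D_p)^{\theta_p}$ and hence $F(p)=\phi\bigl(D_p,\theta_p\bigr)$, where $\phi(d,\theta):=\dfrac{(1+d)^{\theta}-1}{d}$ for $d>0$.

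The argument then rests on three monotonicity facts. First, since $\bm x\notin\{\bm e_1,\dots,\bm e_n,\tfrac1n\bm e\}$, the equality cases of $\mathcal{NE}(1,p)$ recorded after \eqref{eq:ne-1p} hold strictly, i.e. $1<\rho_p<1+D_p$, so $\theta_p\in(0,1)$ for every $p\ge 2$. Second, $D_p=n^{1-1/p}-1$ is strictly increasing in $p$. Third, $\theta_p$ is non-increasing in $p$: writing $R(p):=-\ln\sum_i x_i^p$ one computes $R''(p)=-\operatorname{Var}_{w^{(p)}}\!\bigl(\ln x_i\bigr)\le 0$ with $w^{(p)}_i:=x_i^p/\sum_j x_j^p$, so $R$ is concave on $[1,\infty)$ with $R(1)=0$; therefore $\theta_p=\dfrac{R(p)-R(1)}{(p-1)\ln n}$ is a chord slope of a concave function anchored at $p=1$, hence non-increasing. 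I also need the elementary fact that $\phi(d,\theta)$ is strictly increasing in $\theta$ (as $\partial_\theta\phi=(1+d)^\theta\ln(1+d)/d>0$) and, for each fixed $\theta\in(0,1)$, strictly decreasing in $d$ — the latter because $\phi(d,\theta)=\bigl(g(1+d)-g(1)\bigr)/\bigl((1+d)-1\bigr)$ is a chord slope of the strictly concave map $g(u)=u^\theta$.

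Combining these, for any $2\le p<q$ we have $\theta_p,\theta_q\in(0,1)$, $\theta_q\le\theta_p$ and $D_p<D_q$, whence
\[
F(q)=\phi(D_q,\theta_q)\;\le\;\phi(D_q,\theta_p)\;<\;\phi(D_p,\theta_p)=F(p),
\]
the first step by monotonicity of $\phi$ in $\theta$ and the second by its strict monotonicity in $d$; this yields the claim. The one step that needs genuine computation is the third — the concavity of $p\mapsto-\ln\sum_i x_i^p$, equivalently the classical monotonicity of the Rényi entropy of $\bm x$ in its order — and I expect that to be the main, though routine, obstacle; everything else is bookkeeping with the identity $1+D_p=n^{1-1/p}$ and the equality conditions for norm equivalence already available in the excerpt.
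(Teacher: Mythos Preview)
Your argument is correct and genuinely different from the paper's. The paper proceeds by a direct sign analysis of $(\ln F)'(p)$: it expresses the derivative through $S_p'/S_p$, rewrites this via the Shannon entropy $H(\bm w)$ of the tilted weights $w_i=x_i^p/S_p$, bounds $H(\bm w)$ by the R\'enyi entropy $H_{1/p}(\bm w)$, and finishes with a separate elementary inequality (Lemma~\ref{lem:log_bound}) comparing $-\ln\|\bm x\|_p/(1-\|\bm x\|_p)$ to its value at the uniform vector. Your route is derivative-free: the identification $F(p)=\phi(D_p,\theta_p)$ with $\theta_p=H_p(\bm x)/\ln n$ (the normalised R\'enyi entropy of $\bm x$ itself) reduces the question to two chord-slope monotonicities for concave functions---$u\mapsto u^{\theta}$ with $\theta\in(0,1)$ for the $d$-variable, and $p\mapsto -\ln\sum_i x_i^p$ for the $\theta$-variable---together with the trivial fact that $D_p$ increases. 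Both proofs ultimately lean on R\'enyi monotonicity, but you invoke it for $\bm x$ at order $p$, whereas the paper invokes it for $\bm w$ at order $1/p$; your packaging avoids the auxiliary Lemma~\ref{lem:log_bound} entirely and makes the role of R\'enyi entropy transparent. The paper's approach, by contrast, yields an explicit bound on $(\ln F)'$ that could in principle be sharpened to a quantitative rate. One cosmetic point: when some $x_i=0$ the variance expression $\operatorname{Var}_{w^{(p)}}(\ln x_i)$ should be read over the support of $\bm x$; this is harmless since those coordinates contribute nothing to $S_p$.
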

\begin{proof}
    See Appendix \ref{app:proof}. 
\end{proof}
\begin{theorem} \label{thm:inclusion}
    For any pair of integers $(p_1, p_2)$ such that $2 \leqslant p_1 < p_2$ and $\varepsilon \in (0, 1)$, $\mathcal Y(\varepsilon, p_2) \subset \mathcal Y(\varepsilon, p_1)$. 
\end{theorem}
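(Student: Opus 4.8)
The plan is to recast membership in $\mathcal Y(\varepsilon,p)$ as a scalar inequality in the quantity $F(p)$ of Proposition~\ref{prop:decreasing}, derive the (weak) inclusion from the strict monotonicity of $F$, and then produce an explicit witness vector by a one‑dimensional intermediate‑value argument to upgrade the inclusion to a strict one. I would begin by disposing of the three distinguished families of vectors in $\Delta_n$. For $\varepsilon\in(0,1)$ and any $i$, the coordinate vector $\bm e_i$ has $\|\bm e_i\|_p=1$, so $(1+\varepsilon D_p)\|\bm e_i\|_p=1+\varepsilon D_p>1$ because $D_p>0$; hence $\bm e_i\notin\mathcal Y(\varepsilon,p)$ for every integer $p\geqslant 2$. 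For the centroid, $\|\tfrac1n\bm e\|_p=(D_p+1)^{-1}$, so the defining inequality of \eqref{eq:X1} reads $1+\varepsilon D_p\leqslant D_p+1$, i.e.\ $\varepsilon\leqslant 1$, which always holds; hence $\tfrac1n\bm e\in\mathcal Y(\varepsilon,p)$ for every $p\geqslant 2$ (consistent with Proposition~\ref{prop:corner}). Thus these vectors lie in all the sets $\mathcal Y(\varepsilon,p)$ or in none of them, and they do not affect the inclusion.

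For the remaining vectors $\bm x\in\Delta_n\setminus\{\bm e_1,\dots,\bm e_n,\tfrac1n\bm e\}$ one has $0<\|\bm x\|_p<1$ for every $p\geqslant 2$ (strictly below $1$ since $\bm x$ is not a coordinate vector), so the constraint $(1+\varepsilon D_p)\|\bm x\|_p\leqslant 1$ is equivalent to $\varepsilon D_p\|\bm x\|_p\leqslant 1-\|\bm x\|_p$, that is, to
\begin{gather*}
    \varepsilon \;\leqslant\; \frac{1-\|\bm x\|_p}{D_p\,\|\bm x\|_p} \;=\; F(p),
\end{gather*}
where $F$ is the function in Proposition~\ref{prop:decreasing} (with its dependence on $\bm x$ understood). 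Since $2\leqslant p_1<p_2$, that proposition gives $F(p_2)<F(p_1)$, so $\varepsilon\leqslant F(p_2)$ forces $\varepsilon<F(p_1)$. Combined with the analysis of the distinguished vectors above, this shows every $\bm x\in\mathcal Y(\varepsilon,p_2)$ also lies in $\mathcal Y(\varepsilon,p_1)$, i.e.\ $\mathcal Y(\varepsilon,p_2)\subseteq\mathcal Y(\varepsilon,p_1)$.

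To make the inclusion strict I would build a witness on the segment joining a coordinate vector to the centroid. Set $\bm x(t)\triangleq(1-t)\,\bm e_1+\tfrac{t}{n}\,\bm e$ for $t\in(0,1)$; all coordinates of $\bm x(t)$ are strictly positive and they coincide only at $t=1$, so $\bm x(t)\in\Delta_n\setminus\{\bm e_1,\dots,\bm e_n,\tfrac1n\bm e\}$ for every $t\in(0,1)$, and hence $g(t)\triangleq\frac{1-\|\bm x(t)\|_{p_1}}{D_{p_1}\,\|\bm x(t)\|_{p_1}}$ is well defined and continuous on $(0,1)$. As $t\to 0^+$, $\|\bm x(t)\|_{p_1}\to 1$, so $g(t)\to 0$; as $t\to 1^-$, $\|\bm x(t)\|_{p_1}\to(D_{p_1}+1)^{-1}$, so $g(t)\to 1$. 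Since $\varepsilon\in(0,1)$, the intermediate value theorem gives $t^\star\in(0,1)$ with $g(t^\star)=\varepsilon$, and by the equivalence above $\bm x(t^\star)\in\mathcal Y(\varepsilon,p_1)$. Applying Proposition~\ref{prop:decreasing} to $\bm x(t^\star)$ yields $\frac{1-\|\bm x(t^\star)\|_{p_2}}{D_{p_2}\|\bm x(t^\star)\|_{p_2}}<g(t^\star)=\varepsilon$, so $\bm x(t^\star)\notin\mathcal Y(\varepsilon,p_2)$. Hence $\mathcal Y(\varepsilon,p_2)\subset\mathcal Y(\varepsilon,p_1)$ with the inclusion strict, and transferring through the map of Remark~\ref{rem:norm} gives the same statement for $\mathcal X$.

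The only genuine difficulty — the monotonicity of $F$ in $p$ — has already been isolated as Proposition~\ref{prop:decreasing} (proved in Appendix~\ref{app:proof}); granting it, the argument above is just careful bookkeeping of the boundary vectors together with a one‑dimensional continuity/IVT step. The one point that needs attention is verifying that the interpolation path $\bm x(t)$ genuinely avoids every coordinate vector and the centroid on the open interval $(0,1)$, since otherwise $F$ (equivalently $g$) would fail to be defined along it; this holds because $\bm x(t)$ has all coordinates strictly positive there and equal only at $t=1$.
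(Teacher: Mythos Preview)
Your proof is correct and follows essentially the same route as the paper: both rewrite membership in $\mathcal Y(\varepsilon,p)$ as the scalar inequality $\varepsilon\leqslant\varepsilon_p(\bm x)=F(p)$ and then invoke Proposition~\ref{prop:decreasing} to compare the thresholds at $p_1$ and $p_2$. Your argument is in fact more complete than the paper's: you treat the boundary vectors $\bm e_i$ and $\tfrac1n\bm e$ explicitly and, more importantly, you supply an intermediate-value witness $\bm x(t^\star)$ to certify that the inclusion is \emph{strict}, a step the paper's proof leaves implicit.
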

\begin{proof}
    For each $\bm x\in\Delta_n$ and integer $p\geqslant 2$, define
    \begin{gather}
    \varepsilon_p(\bm x)
    \;\triangleq\;
    \frac{1-\|\bm x\|_p}{D_p\,\|\bm x\|_p}.
    \label{eq:eps-p}
    \end{gather}
    Since
    \[ (1+\varepsilon\,D_p)\,\|\bm x\|_p\leqslant 1   \iff \varepsilon\leqslant \varepsilon_p(\bm x) \]
       we have
    \[ \bm x\in\mathcal Y(\varepsilon,p) \iff 0<\varepsilon\leqslant \varepsilon_p(\bm x).\]
    Therefore $\mathcal Y(\varepsilon,p_2)\subset\mathcal Y(\varepsilon,p_1)$ will follow once we show
    \begin{gather}
    \varepsilon_{p_1}(\bm x) > \varepsilon_{p_2}(\bm x) ~~ \forall ~~ \bm x \in \Delta_n \setminus \left\{ \bm e_1, \dots, \bm e_n, \tfrac{1}{n}\bm e\right\}. \label{eq:final-claim}
    \end{gather}
   The excluded cases correspond to the boundary situation $\varepsilon \in \{0, 1\}$, already treated in Proposition \ref{prop:corner} and Theorem \ref{thm:equiv}.
   For any such $\bm x \in \Delta_n \setminus \left\{ \bm e_1, \dots, \bm e_n, \tfrac{1}{n}\bm e\right\}$, the proof is complete if we show that the map 
    \[ 
        p\mapsto\varepsilon_p(\bm x) = \frac{1-\|\bm x\|_p}{D_p \|\bm x\|_p}
    \]
    is \emph{strictly decreasing} on $[2,\infty]$ which is exactly the statement of Proposition \ref{prop:decreasing}. Figure \ref{fig:proof} illustrates the idea graphically.
   \begin{figure}[htbp]
       \centering
       \includegraphics[scale=0.7]{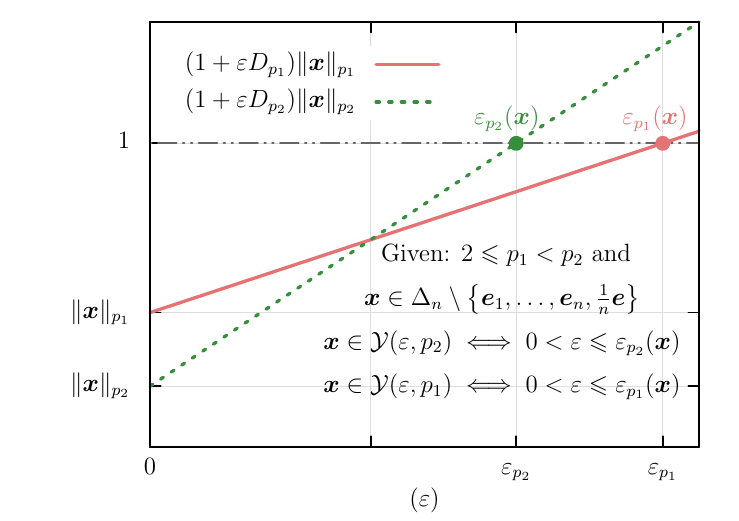}
       \caption{Graphical illustration for $\varepsilon_{p_1}(\bm x) > \varepsilon_{p_2}(\bm x)$.}
       \label{fig:proof}
   \end{figure}
\end{proof}

Theorem \ref{thm:inclusion} shows that the size of the feasible sets of requiring $\bm x$ to be at least $(\varepsilon, p)$-fair decreases with $p$ when $\varepsilon \in (0, 1)$ and Theorem \ref{thm:equiv} shows that the feasible sets are equal for any value of integer $p \geqslant 2$ when $\varepsilon \in \{0, 1\}$. 
Next, we consider an example application to illustrate the use of the models presented thus far in this article.

\section{Multi-Agent Assignment Problem} \label{sec:gap}
In this section, we formulate a basic version of a multi-agent assignment problem (MAAP) and incorporate the fairness models introduced in Section \ref{sec:model} to control the dispersion of the total time required to complete the tasks across different agents. We start by introducing the relevant notation. Let $\mathcal A$ denote the set of agents and $\mathcal J$ the set of tasks. We assume that $|\mathcal J| > |\mathcal A|$. Each task $j\in\mathcal J$ must be completed by exactly one agent. For any agent $a\in\mathcal A$ and task $j\in\mathcal J$, let $t_{a,j}$ denote the time taken by agent $a$ to complete task $j$. The multi-agent assignment problem seeks to assign each task $j\in\mathcal J$ to an agent in $\mathcal A$ to minimize the total completion time. Furthermore, we introduce binary variables
\[
x_{a,j} =
\begin{cases}
1, & \text{if task }j\text{ is assigned to agent }a,\\
0, & \text{otherwise.}
\end{cases}
\]
Using these binary decision variables, a Mixed-Integer Linear Program (MILP) for the MAAP is as follows:
\begin{subequations}
    \begin{flalign}
        \min: ~~& \sum_{a \in \mathcal A} t_a \\ 
        \text{subject to:} ~~& t_a = \sum_{j \in \mathcal J} t_{a,j}\cdot x_{a,j} 
        ~~ \forall a \in \mathcal A, \\ 
        & \sum_{a \in \mathcal A} x_{a,j} = 1 
        ~~ \forall j \in \mathcal J,\\ 
        & x_{a,j} \in \{0, 1\} 
        ~~\forall a \in \mathcal A,\;j \in \mathcal J.
    \end{flalign}
\label{eq:maap}
\end{subequations}
Although the formulation in \eqref{eq:maap} is presented as an MILP, its constraint matrix is the node–arc incidence matrix of a bipartite graph, which is totally unimodular. Consequently, the linear programming relaxation has integral extreme points, and solving it via any linear programming solver yields the optimal integer solution \cite{schrijver2003combinatorial}. Equivalently, one can apply the Hungarian algorithm directly to obtain the optimum in $O(|\mathcal J|^3)$ time \cite{kuhn1955hungarian}. In contrast, a natural generalization in which each task carries a weight and each agent has a capacity constraint becomes NP‑hard (the generalized assignment problem) \cite{cattrysse1992survey}. In this work, we restrict attention to the uncapacitated variant in \eqref{eq:maap}, focusing on illustrating how $(\varepsilon,p)$‑fairness models control the dispersion of the processing times $t_a$ across agents in the simplest assignment problem, rather than a general variant of the MAAP.

\subsection{Fairness-Constrained Multi-Agent Assignment Problem} \label{subsec:fair-maap}
We let $\bm t$ denote the vector of $t_a$ values over all agents $a \in \mathcal A$. The fairness‑constrained MAAP seeks to control the dispersion of the vector $\bm t$.  This is enforced by requiring $\bm t$ to be at least $(\varepsilon,p)$‑fair for any $\varepsilon\in(0,1]$ and $p\geqslant2$, as in \eqref{eq:e-fair}. When $\varepsilon=0$, constraint \eqref{eq:e-fair} becomes trivial and the problem reduces to \eqref{eq:maap}.  The fairness constraint for fixed $p\geqslant2$ and $\varepsilon\in(0,1]$ is  
\begin{gather}
    (1 + \varepsilon D_p)\,\|\bm t\|_p \;\leqslant\;\sum_{a\in\mathcal A} t_a.
    \label{eq:fairness-constraint-maap}
\end{gather}
Although the problem in \eqref{eq:maap} is solvable in polynomial time, adding \eqref{eq:fairness-constraint-maap} makes the resulting problem NP‑complete.  Indeed, for two agents with $\varepsilon=1$, it reduces to the classical partition problem \cite{korf1998complete} that is known to be strongly NP-complete. 
In the original $\varepsilon$-fairness framework of \cite{sundar2025parametric}, the constraint \eqref{eq:fairness-constraint-maap} for $p=2$ is implemented as a single second-order cone constraint, leading to a mixed-integer second-order cone program that can be handled directly by conic solvers. 
As mentioned previously in Section \ref{sec:model}, the present work extends this formulation to arbitrary $p \geqslant 2$. For $2 < p < \infty$, the fairness constraint becomes an $\ell_p$-norm cone, which we handle via cone disaggregation and outer approximation (Section~\ref{subsec:oa}). For $p = \infty$, it reduces to a set of linear inequalities, yielding a mixed-integer linear program. Thus, while the overall modeling framework and outer-approximation approach are inherited from prior work, the extension to a parametric family of cones and their comparative analysis are new.


\subsection{Outer Approximation of $\ell_p$-Norm Cone} \label{subsec:oa}
We start with the definition of an $\ell_p$-norm cone \cite{boyd2004convex} of dimension $(n+1)$ as
\begin{gather}
    P = \left\{(z, \bm y)\in\mathbb R^{n+1}: z \geqslant \|\bm y\|_p\right\}.
    \label{eq:p-norm-cone}
\end{gather}
Note that the fairness constraint in \eqref{eq:fairness-constraint-maap} for any $p\geqslant2$ can be represented in the form \eqref{eq:p-norm-cone} by introducing an auxiliary variable.  It is known in the literature that when convex cones are outer approximated by linear inequalities, it is both theoretically stronger and computationally more efficient to decompose a high-dimensional cone into several lower-dimensional cones and approximate each of those individually, rather than approximating the high-dimensional cone directly \cite{lubin2018polyhedral}.  Converting a high-dimensional conic constraint into many low-dimensional cones is called ``cone disaggregation'' \cite{lubin2018polyhedral}.  To that end, we present the steps to disaggregate the $(n+1)$-dimensional $\ell_p$-norm cone in \eqref{eq:p-norm-cone} into $n$ three-dimensional cones plus a single linear constraint:
\begin{subequations}
\begin{gather*}
    z^p \;\geqslant\;\sum_{i=1}^n y_i^p
    ~~\equiv~~
    z \;\geqslant\;\sum_{i=1}^n \frac{y_i^p}{z^{p-1}},
    \label{eq:p-norm-cone-1}\\
    \equiv~~
    \left\{\,z = \sum_{i=1}^n Y_i \;\text{and}\; Y_i\,z^{p-1}\geqslant y_i^p~~\forall\,i\right\}\\
    \equiv~~
    \left\{\,z = \sum_{i=1}^n Y_i \;\text{and}\; Y_i^{\frac1p}\,z^{1-\frac1p}\geqslant y_i~~\forall\,i\right\}.
\end{gather*}
\end{subequations}
In the last equivalence, each constraint $Y_i^{\tfrac 1p}\,z^{1-\tfrac 1p}\geqslant y_i$ is convex for $2\leqslant p<\infty$ and in fact defines a three-dimensional power cone \cite{boyd2004convex}:
\begin{flalign}
    \mathcal P_3^{\alpha,1-\alpha}
    &\triangleq
    \left\{(v_1,v_2,v_3)\in\mathbb R_{\geqslant0}^3: v_1^{\alpha}v_2^{1-\alpha}\geqslant v_3\right\}.
    \label{eq:power-cone}
\end{flalign}
Hence, the cone in \eqref{eq:p-norm-cone} can be rewritten as
\begin{flalign}
    \left\{\,z = \sum_{i=1}^n Y_i \;\text{and}\; (Y_i,z,y_i)\in\mathcal P_3^{1/p,1-1/p}~~\forall\,i\right\},
    \label{eq:dis-p}
\end{flalign}
which comprises $n$ three-dimensional power cones and one linear equality.  A linear outer approximation of each power cone is obtained by the first-order Taylor expansion at a point $(Y_i^0,z^0,y_i^0)$ on its surface (i.e., satisfying the conic constraint at equality).  Specifically, the linear outer approximation of $(Y_i,z,y_i)\in\mathcal P_3^{1/p,1-1/p}$ at $(Y_i^0,z^0,y_i^0)$ is
\begin{flalign}
    \left(\frac1p\right)\,y_i^0\,z^0\,Y_i \;+\;\left(1-\frac1p\right)\,y_i^0\,Y_i^0\,z
    \;\geqslant\;
    Y_i^0\,z^0\,y_i.
    \label{eq:oa-p}
\end{flalign}
In the algorithm to solve the mixed-integer convex optimization problem, we relax the $n$ three-dimensional power cones and iteratively add these linear outer approximations whenever the solution to the relaxed problem violates one of them.  The required point on the surface of the power cone is computed by projecting the infeasible solution onto the corresponding power cone. 

\section{Computational Results} \label{sec:results}
This section provides empirical validation of the theoretical results in Section~\ref{sec:theory} using the formulations from Section~\ref{sec:gap}. All experiments were run on 50 randomly generated instances of the MAAP, each with 5 agents and 250 tasks; the processing times $t_{a,j}$ were drawn uniformly at random. Both variants--with and without fairness constraints--were implemented in Kotlin and solved using CPLEX~22.1. The outer‑approximation algorithm of Section~\ref{subsec:oa} was realized via CPLEX's lazy‑constraint callback. Source code for instance generation and both solution methods is available at  
\url{https://github.com/abhay-singh11/FairTaskAllocation}. 

For each instance we solved the base assignment problem \eqref{eq:maap} augmented by the \((\varepsilon,p)\)–fairness constraint \eqref{eq:fairness-constraint-maap} for  
\[
  p\in\{2,3,5,10,\infty\}, 
  \quad 
  \varepsilon\in\{0,0.1,\dots,0.9,0.99,1.0\}.
\]
We found that all 50 instances were infeasible at $\varepsilon=1.0$, indicating that that perfect fairness cannot be enforced for the instances; we attribute this behaviour to the fact that $\{t_{a, j}\}$ are generated randomly. For this reason, we include $\varepsilon=0.99$ in our study: at this level, all instances remain feasible for every chosen $p$.

We let $\bm t^*(\varepsilon,p)$ denote the the optimal solution vector for the MAAP under fixed $(\varepsilon,p)$, and $\bm t^*(0)$ for the solution of the unconstrained problem \eqref{eq:maap}. This notation is chosen to explicitly indicate that when $\varepsilon = 0$ in \eqref{eq:fairness-constraint-maap}, it is equivalent to imposing no fairness constraints on the MAAP.
 
\subsection{Dispersion Control} \label{subsec:dispersion-results}
Here, we present the results for one representative instance out of the 50 randomly generated ones, since the other 49 exhibit the same qualitative behaviour. Figure~\ref{fig:cv} reports the coefficient of variation $\mathrm{CV}(\bm t^*(\varepsilon, p))$, computed as in Proposition~\ref{prop:cv-bound}. First, we observe that $\mathrm{CV}(\bm t^*(\varepsilon, p))$--the measure of dispersion--decreases monotonically with $\varepsilon$. Proposition~\ref{prop:cv-bound} establishes that the upper bound on the coefficient of variation decreases in $\varepsilon$; Figure~\ref{fig:cv} confirms that the actual $\mathrm{CV}(\bm t^*(\varepsilon, p))$ likewise decreases. Next, we note the trend of diminishing returns: beyond $\varepsilon\approx0.70$, the marginal reduction in dispersion is negligible, indicating that least fairness gains occur at moderate values of $\varepsilon$.

\begin{figure}[htbp]
    \centering
    \includegraphics[scale=0.5]{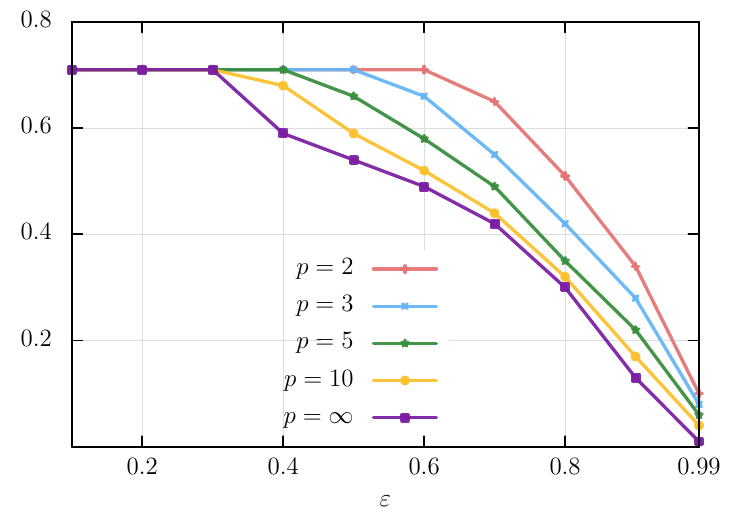}
    \caption{Coefficient of variation $\mathrm{CV}(\bm t^*(\varepsilon, p))$ versus $\varepsilon$ for different $p$ for one instance.}
    \label{fig:cv}
\end{figure}

\subsection{Price of Fairness} \label{subsec:pof}
The term ``Price of Fairness'' (PoF) is defined as the percentage increase in the objective value of the fairness‑constrained MAAP of Section~\ref{subsec:fair-maap} relative to that of the unconstrained problem \eqref{eq:maap}.  This definition is standard in the literature \cite{bertsimas2011price,sundar2025parametric}.  Formally,
\[
    \mathrm{PoF}(\varepsilon,p)
    \;\triangleq\;\left(\frac{\bm t^*(\varepsilon,p)^\intercal\bm e}{\bm t^*(0)^\intercal\bm e}-1\right)\,\times100\%.
    \label{eq:pof}
\]
\begin{figure}[htbp]
    \centering
    \includegraphics[scale=0.5]{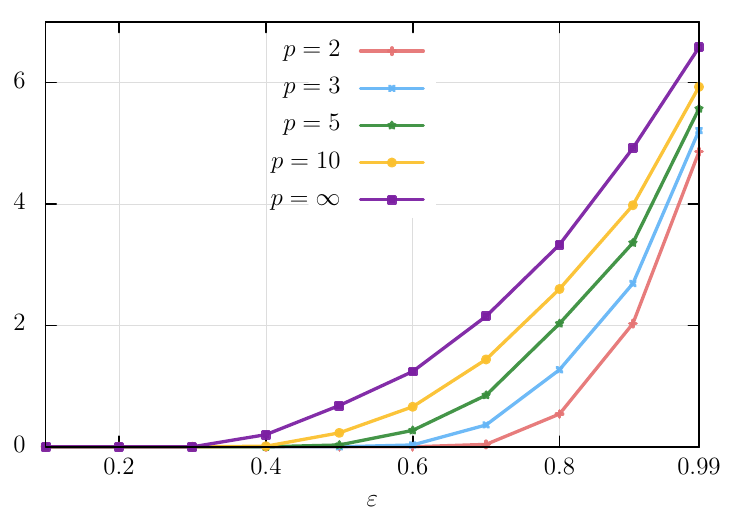}
    \caption{$\mathrm{PoF}(\varepsilon,p)$ versus $\varepsilon$ for various $p$.}
    \label{fig:pareto}
\end{figure}
Figure~\ref{fig:pareto} illustrates this trend for one representative instance; the other 49 instances show the same qualitative behavior.  The increasing ordering 
$\mathrm{PoF}(\varepsilon,p_1) \geqslant \mathrm{PoF}(\varepsilon,p_2)$ for $p_1>p_2$
reflects the inclusion property of Theorem~\ref{thm:inclusion}.  

\subsection{Practical Tuning of $\varepsilon$ and $p$} \label{subsec:tuning}
The parameters $(\varepsilon,p)$ play complementary roles in practice. For any fixed $p$, increasing $\varepsilon$ provides a continuous way to trace the fairness--efficiency trade-off, with larger values imposing tighter dispersion control. By contrast, $p$ determines the geometry of the dispersion measure and thus the shape of the attainable trade-off curve.  This distinction is also evident empirically. As shown in Figure~\ref{fig:pareto}, for larger values of $p$, varying $\varepsilon$ yields a more diverse set of solutions spanning a wider range of fairness--efficiency combinations, whereas smaller values of $p$ produce a more compressed frontier. 

These observations suggest a practical tuning strategy. One may first select $p$ based on modeling and computational considerations. For example, smaller values, such as $p=2$, yield smooth aggregate control. Larger values place greater emphasis on extreme deviations. The case $p=\infty$ admits a linear formulation. One can then tune $\varepsilon$ to select a desired operating point along the resulting trade-off curve.  In particular, increasing $\varepsilon$ is most natural when refining fairness within a fixed modeling framework. Increasing $p$ is preferable when a richer or more expressive set of candidate solutions is desired.

\subsection{Computation Times} \label{subsec:times}
The box‑plots of computation times are shown in Figure~\ref{fig:time}.  Two trends stand out: (i) The computation time increases with $\varepsilon$.  As $\varepsilon$ grows, the feasible region shrinks, making it harder for the solver to find and prove feasibility. (ii) For $p=\infty$, solve times are generally lower than for finite $p$.  In that case, the fairness constraint reduces to a set of linear inequalities, whereas for $p<\infty$ the conic constraints incur additional overhead. Furthermore, when $p < \infty$, computation times are observed to increase with $p$. 

\begin{figure*}[htb!]
\centering
  \begin{subfigure}[t]{.45\linewidth}
    \centering\includegraphics[scale=0.4]{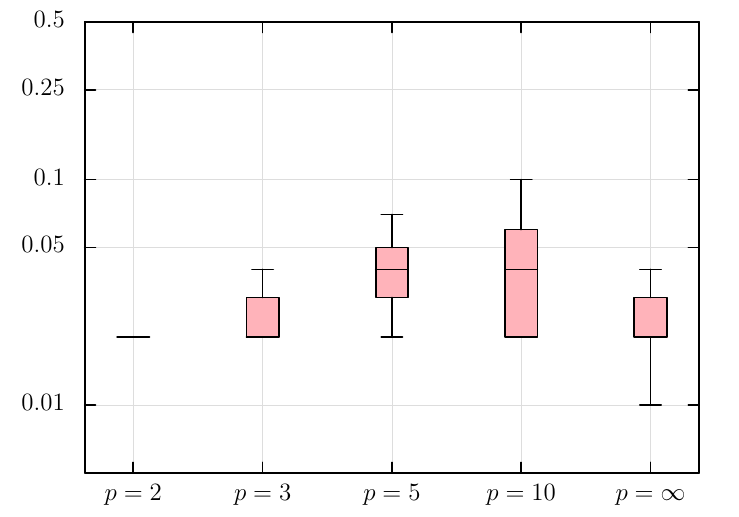}
    \caption{$\varepsilon = 0.5$}
  \end{subfigure} 
  \begin{subfigure}[t]{.45\linewidth}
    \centering\includegraphics[scale=0.4]{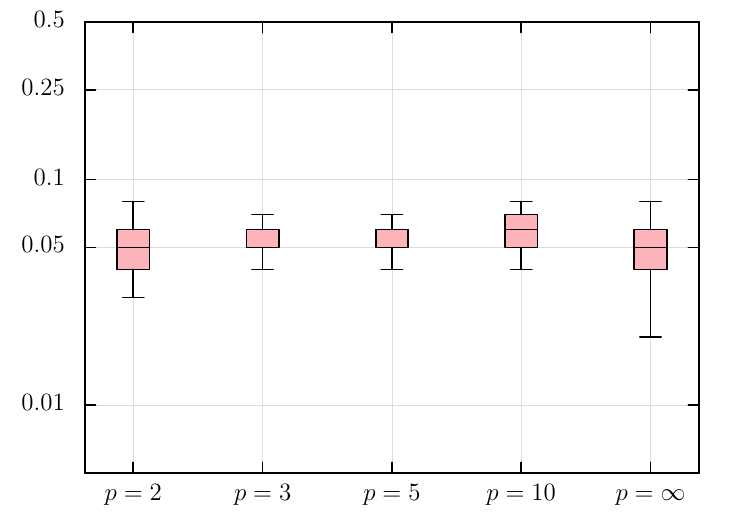}
    \caption{$\varepsilon = 0.7$}
  \end{subfigure} \\
  \begin{subfigure}[t]{0.45\linewidth}
\centering\includegraphics[scale=0.4]{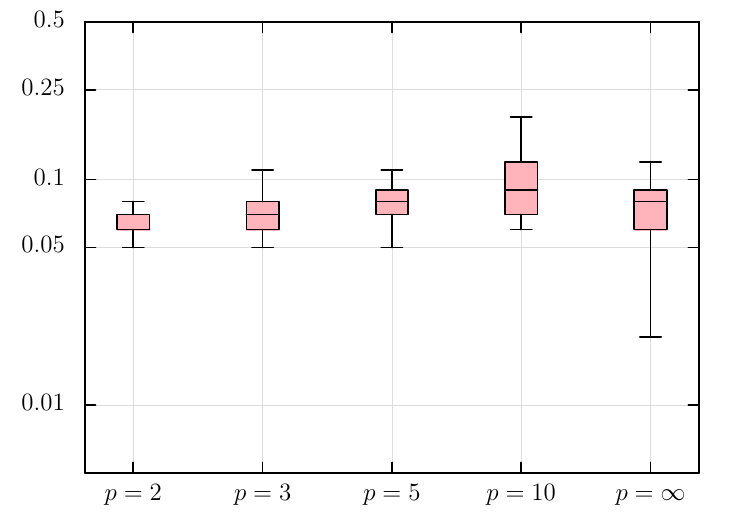}
    \caption{$\varepsilon = 0.9$}
  \end{subfigure} 
  \begin{subfigure}[t]{0.45\linewidth}
    \centering\includegraphics[scale=0.4]{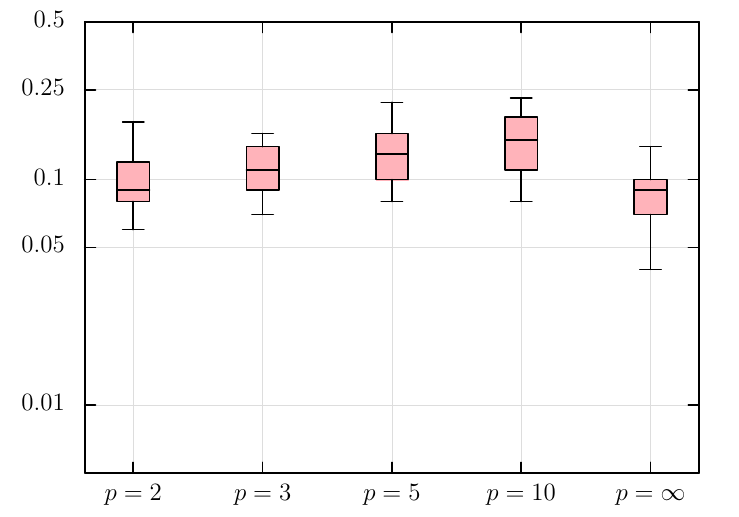}
    \caption{$\varepsilon = 0.99$}
  \end{subfigure}
  \caption{Box-plot of computation times in seconds for all the 50 instances for $\varepsilon \in \{0.5, 0.7, 0.9, 0.99\}$.}
  \label{fig:time}
\end{figure*}

\section{Conclusion} \label{sec:conclusion}
The article presents a countably infinite family of convex models--parameterized by $\varepsilon\in[0,1]$--to control the dispersion of a subset of decision variables in an optimization problem.  The parameter $\varepsilon$ bounds dispersion: when $\varepsilon=0$, dispersion is unconstrained; when $\varepsilon=1$, dispersion is forced to zero.  This family is indexed by integers $p \geqslant 2$, and all the models in the family are derived from first principles using the fundamental result of equivalence of finite-dimensional norms. For $p = 2$, the model is a second‑order cone; for $2<p<\infty$, it is an $\ell_p$‑norm cone; and for $p = \infty$, it reduces to a set of linear inequalities. The article also establishes several theoretical properties that align with intuitive notions of fair solutions. The $p = \infty$ variant is appealing computationally since the model is linear.  
We then evaluate the trade‑offs of these models on the MAAP.  In particular, the $p = \infty$ variant is appealing both computationally---since the model is linear---and in terms of dispersion control, as increasing $\varepsilon$ yields a more gradual reduction in dispersion.  
Finally, these models can be integrated into optimization models in many application domains to control dispersion or enforce the fairness of a vector of decision variables with minimal modifications to existing solution algorithms.

\begin{appendices}

\section{Proof of Proposition \ref{prop:decreasing}} \label{app:proof}
We start with introducing additional notation and a few lemmas that will make the presentation of the proof of the main result cleaner. For any vector $\bm x \in \Delta_n \setminus \left\{ \bm e_1, \dots, \bm e_n, \tfrac{1}{n}\bm e\right\}$, we let 
\begin{gather}
S_p = \sum_{i=1}^n x_i^p ~ \Rightarrow ~~ S_p^{\frac 1p} = \|\bm x\|_p \label{eq:Sr} 
\end{gather}
and
\begin{gather} 
w_i = \frac{x_i^p}{S_p} ~~ \Rightarrow ~~ \sum_{i=1}^n w_i = 1 \text{ and } \sum_{i=1}^n w_i^{\frac 1p} = \frac 1{\|\bm x\|_p}. \label{eq:w}
\end{gather}
Also, applying $\mathcal {NE}(1, p)$ to $\bm x $ yields 
\begin{flalign}
    &\|\bm x\|_p < 1 < (D_p+1)~\|\bm x\|_p 
    ~~\Longrightarrow~~ \|\bm x\|_p \in \left((D_p + 1)^{-1}, 1\right) \label{eq:xr_bounds}
\end{flalign}
Observe that $\bm x = \bm e_i$ for any $i \in \{1, \dots, n\}$ if and only if $\|\bm x\|_p = 1$ and $\bm x = \tfrac 1n \bm e$ if and only if $\|\bm x\|_p = (D_p + 1)^{-1}$. Since these values are excluded from the possible values $\bm x$ can take, the inequality in \eqref{eq:xr_bounds} is strict.  
\begin{lemma} \label{lem:log_bound}
    For any $\bm x \in \Delta_n \setminus \left\{ \bm e_1, \dots, \bm e_n, \tfrac{1}{n}\bm e\right\}$ and integer $p \geqslant 2$, the following inequality holds 
    \begin{gather}
        \frac{p}{p-1} \left[ \frac{-\ln \|\bm x\|_p}{1-\|\bm x\|_p}\right] -\ln n \left[ 1 + \frac 1{D_p}\right] < 0 \label{eq:bound-1}
    \end{gather}
\end{lemma}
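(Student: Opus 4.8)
The plan is to reduce inequality \eqref{eq:bound-1} to a monotonicity statement about the single-variable function $g(t)=\dfrac{-\ln t}{1-t}$ on $(0,1)$, and then invoke the bounds on $\|\bm x\|_p$ already recorded in \eqref{eq:xr_bounds}. First I would use $D_p+1=n^{1-1/p}$ from \eqref{eq:D_r} together with $\tfrac{p}{p-1}=\tfrac{1}{1-1/p}$ and $1+\tfrac1{D_p}=\tfrac{D_p+1}{D_p}=\tfrac{n^{1-1/p}}{n^{1-1/p}-1}=\tfrac{1}{1-n^{-(1-1/p)}}$, so that, writing $t\triangleq\|\bm x\|_p$, the claim \eqref{eq:bound-1} becomes equivalent to
\[
    \frac{1}{1-1/p}\cdot\frac{-\ln t}{1-t}\;<\;\frac{\ln n}{1-n^{-(1-1/p)}}.
\]

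Next I would prove that $g(t)=\dfrac{-\ln t}{1-t}$ is strictly decreasing on $(0,1)$. Writing $g=h/k$ with $h(t)=-\ln t$ and $k(t)=1-t$, the numerator of $g'$ is $\phi(t)\triangleq h'(t)k(t)-h(t)k'(t)=-\tfrac{1-t}{t}-\ln t=1-\tfrac1t-\ln t$; since $\phi(1)=0$ and $\phi'(t)=\tfrac1{t^2}-\tfrac1t=\tfrac{1-t}{t^2}>0$ on $(0,1)$, we get $\phi(t)<0$ there, hence $g'(t)<0$, so $g$ is strictly decreasing on $(0,1)$.

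Finally, because $\bm x\notin\{\bm e_1,\dots,\bm e_n,\tfrac1n\bm e\}$, the strict bounds \eqref{eq:xr_bounds} give $t=\|\bm x\|_p\in\bigl(n^{-(1-1/p)},1\bigr)$, where $n^{-(1-1/p)}=(D_p+1)^{-1}$. Monotonicity of $g$ then yields
\[
    \frac{-\ln t}{1-t}=g(t)\;<\;g\bigl(n^{-(1-1/p)}\bigr)=\frac{(1-1/p)\ln n}{1-n^{-(1-1/p)}},
\]
and multiplying through by $\tfrac{1}{1-1/p}>0$ gives exactly the displayed inequality above, which is equivalent to \eqref{eq:bound-1}. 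I do not anticipate a real obstacle: the only non-mechanical ingredient is the monotonicity of $g$ -- the classical fact that $t\mapsto\tfrac{\ln t}{t-1}$ decreases on $(0,1)$ -- and the evaluation $g\bigl((D_p+1)^{-1}\bigr)$ is engineered precisely so that the right-hand side of \eqref{eq:bound-1} is matched. The one point requiring care is that every inequality stays strict, which is ensured by the exclusion of the vectors $\bm e_i$ and $\tfrac1n\bm e$ that would turn \eqref{eq:xr_bounds} into an equality.
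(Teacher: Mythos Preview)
Your proposal is correct and follows essentially the same route as the paper: both reduce \eqref{eq:bound-1} to the strict monotonicity of $g(t)=\dfrac{-\ln t}{1-t}$ on $(0,1)$, apply the strict lower bound $\|\bm x\|_p>(D_p+1)^{-1}=n^{-(1-1/p)}$ from \eqref{eq:xr_bounds}, and evaluate $g$ at that endpoint to recover the right-hand side. Your write-up is in fact slightly more explicit, supplying the derivative argument for $g'<0$ that the paper only asserts.
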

\begin{proof}
    We start the proof by observing that the function 
    \begin{gather*}
        f(\|\bm x\|_p) = \frac{-\ln \|\bm x\|_p}{1-\|\bm x\|_p} ~~\text{ strictly decreases for $\|\bm x\|_p \in (0, 1)$.}
    \end{gather*}
    The above result is easy to see by evaluating the derivative and checking that the sign of the derivative is strictly negative. 
    Now, for any fixed $p \geqslant 2$, we define $t_0 = (D_p+1)^{-1}$. Since $\|\bm x\|_p\mapsto f(\|\bm x\|_p)$ is a strictly decreasing function, for every value of $\|\bm x\|_p > t_0$ we have $f(\|\bm x\|_p) < f(t_0)$, i.e., 
    \begin{flalign*} 
        -\frac{\ln \|\bm x\|_p}{1-\|\bm x\|_p} &< -\frac{\ln (D_p + 1)^{-1}}{1-(D_p+1)^{-1}} \\
        & = \frac{\ln (D_r + 1)}{1-(D_r+1)^{-1}} \\ 
        & = \frac{\ln n^{1-\frac 1r}}{1-(D_r+1)^{-1}} \\ 
        & = \frac{r-1}{r} ~ \ln n \left[ \frac{D_r + 1}{D_r}\right] \\ 
        & = \frac{p-1}{p} ~ \ln n \left[ 1 + \frac 1{D_p}\right] \\ 
    \Rightarrow ~~ \frac{p}{p-1} \left[ \frac{-\ln \|\bm x\|_p}{1-\|\bm x\|_p}\right] & < \ln n  \left[ 1 + \frac 1{D_p}\right] \\ 
    \Rightarrow ~~ \frac{p}{p-1} \left[ \frac{-\ln \|\bm x\|_p}{1-\|\bm x\|_p} \right] & -\ln n  \left[ 1 + \frac 1{D_p}\right] < 0.
    \end{flalign*}
    We remark that when $\bm x = \tfrac 1n \bm e$, $\|\bm x\|_p$ evaluates to $t_0$ and the above inequality is satisfied at equality, and the claim of the lemma does not hold. Hence, $\bm x \neq \tfrac 1n \bm e$ is assumed in the statement of the lemma.
\end{proof}
\begin{lemma} \label{lem:entropy}
    Suppose $\bm w = [w_1, \dots, w_n]^\intercal$, then for any 
    $p \geqslant 2$, the function $H(\bm w) = -\sum_i w_i \ln w_i$ satisfies 
    \begin{gather} 
        0 \leqslant H(\bm w) \leqslant -\frac{p}{p-1} \ln \|\bm x\|_p. \label{eq:bound-2}
    \end{gather}
\end{lemma}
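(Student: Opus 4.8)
\textbf{Proof plan for Lemma~\ref{lem:entropy}.}
The plan is to establish the two inequalities in \eqref{eq:bound-2} separately. The lower bound $H(\bm w) \geqslant 0$ is immediate: the $w_i = x_i^p / S_p$ form a probability distribution (they are non-negative and sum to $1$ by \eqref{eq:w}), and the Shannon entropy of any probability vector is non-negative since each term $-w_i \ln w_i \geqslant 0$ for $w_i \in [0,1]$. The content is in the upper bound, and here the idea is to relate $H(\bm w)$ to the $\ell_p$-norm of $\bm x$ through the explicit form of $w_i$. Taking logarithms in \eqref{eq:w} gives $\ln w_i = p \ln x_i - \ln S_p$, so that
\[
    H(\bm w) = -\sum_{i=1}^n w_i \ln w_i = -p \sum_{i=1}^n w_i \ln x_i + \ln S_p,
\]
using $\sum_i w_i = 1$. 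Since $S_p = \|\bm x\|_p^{\,p}$, the term $\ln S_p = p \ln \|\bm x\|_p$, so the task reduces to bounding $-\sum_i w_i \ln x_i$ from above.

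The key step is to bound $-\sum_i w_i \ln x_i$. Writing $-\sum_i w_i \ln x_i = -\tfrac{1}{p}\sum_i w_i \ln x_i^p = -\tfrac1p \sum_i w_i \ln(w_i S_p) = -\tfrac1p H(\bm w)^{-}\!$--- more cleanly, $\ln x_i^p = \ln w_i + \ln S_p$, hence
\[
    -\sum_{i=1}^n w_i \ln x_i = -\frac1p\sum_{i=1}^n w_i\bigl(\ln w_i + \ln S_p\bigr) = \frac{1}{p}\bigl(H(\bm w) - \ln S_p\bigr).
\]
Substituting back, $H(\bm w) = -p\cdot \tfrac1p(H(\bm w)-\ln S_p) + \ln S_p = -H(\bm w) + 2\ln S_p$, which is not quite what is wanted and suggests that a direct algebraic rearrangement is circular. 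Instead, the right tool is a convexity/Jensen argument: since $x\mapsto \ln x$ is concave, but we need an \emph{upper} bound on $-\sum w_i \ln x_i$; equivalently a lower bound on $\sum w_i \ln x_i$. A lower bound follows from $\ln x_i \geqslant 1 - 1/x_i$ only if $x_i \leqslant 1$, which holds since $\bm x \in \Delta_n$ forces $x_i \leqslant 1$; alternatively, and more directly, one observes $\sum_i w_i \ln x_i^p \geqslant \sum_i w_i \ln S_p \cdot(\text{something})$. The cleanest route: since each $x_i \leqslant 1$ we have $x_i^p \leqslant x_i$ (as $p\geqslant 2 \geqslant 1$ and $x_i \in [0,1]$), so $w_i = x_i^p/S_p \leqslant x_i/S_p$, giving $\ln w_i \leqslant \ln x_i - \ln S_p$, i.e.\ $\ln x_i \geqslant \ln w_i + \ln S_p$. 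Therefore
\[
    \sum_{i=1}^n w_i \ln x_i \;\geqslant\; \sum_{i=1}^n w_i \ln w_i + \ln S_p \;=\; -H(\bm w) + p\ln\|\bm x\|_p.
\]
Hence $-p\sum_i w_i \ln x_i \leqslant p H(\bm w) - p^2 \ln\|\bm x\|_p$, and combined with $H(\bm w) = -p\sum_i w_i\ln x_i + p\ln\|\bm x\|_p$ this yields $H(\bm w) \leqslant pH(\bm w) - p^2\ln\|\bm x\|_p + p\ln\|\bm x\|_p = pH(\bm w) - p(p-1)\ln\|\bm x\|_p$, i.e.\ $(p-1)H(\bm w) \leqslant -p(p-1)\ln\|\bm x\|_p$, and dividing by $p-1 > 0$ gives exactly $H(\bm w) \leqslant -\tfrac{p}{p-1}\ln\|\bm x\|_p$, as claimed.

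The main obstacle is identifying the correct elementary inequality to inject: a naive substitution of $\ln w_i = p\ln x_i - \ln S_p$ merely rewrites $H(\bm w)$ in terms of itself and is circular, so one must exploit the extra structure that $\bm x$ lies in the probability simplex---specifically that $x_i \leqslant 1$, which makes $x_i^p \leqslant x_i$ and thus $w_i \leqslant x_i / S_p$. Once that monotone comparison is in place, the bound follows by a short linear rearrangement. I would also note for completeness that equality in the upper bound would require $x_i^p = x_i$ for all $i$ in the support, i.e.\ each $x_i \in \{0,1\}$, which together with $\bm x\in\Delta_n$ forces $\bm x = \bm e_i$; since that case is excluded, the upper bound is in fact strict on the relevant domain, though only the non-strict version is needed downstream.
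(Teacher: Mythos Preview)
Your argument contains a sign error at the decisive step, and the underlying inequality you inject actually points the wrong way.

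From $H(\bm w)\leqslant pH(\bm w)-p(p-1)\ln\|\bm x\|_p$ you subtract $pH(\bm w)$ to get $(1-p)H(\bm w)\leqslant -p(p-1)\ln\|\bm x\|_p$, i.e.\ $-(p-1)H(\bm w)\leqslant -p(p-1)\ln\|\bm x\|_p$. Dividing by the \emph{negative} number $-(p-1)$ flips the inequality and yields only $H(\bm w)\geqslant p\ln\|\bm x\|_p$, which is trivial since the right side is negative. Your line ``$(p-1)H(\bm w)\leqslant -p(p-1)\ln\|\bm x\|_p$'' has the wrong sign on the left. In fact your key input $w_i\leqslant x_i/S_p$ (from $x_i^p\leqslant x_i$) is equivalent, after substitution, to the tautology $\sum_i w_i\ln x_i\leqslant 0$, so it cannot by itself produce a nontrivial upper bound on $H(\bm w)$.

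The paper's proof avoids this by invoking the monotonicity of R\'enyi entropy: with $\alpha=1/p<1$ one has $H(\bm w)=H_1(\bm w)\leqslant H_{1/p}(\bm w)=\frac{p}{p-1}\ln\sum_i w_i^{1/p}=-\frac{p}{p-1}\ln\|\bm x\|_p$, using $\sum_i w_i^{1/p}=1/\|\bm x\|_p$ from \eqref{eq:w}. If you prefer an elementary route, the correct ``injected inequality'' is Gibbs' inequality with the comparison distribution $q_i=x_i$ (which is legitimate since $\bm x\in\Delta_n$): $H(\bm w)\leqslant -\sum_i w_i\ln x_i$. Then, writing $\ln x_i=\tfrac1p\ln w_i+\tfrac1p\ln S_p$, one gets $-\sum_i w_i\ln x_i=\tfrac1pH(\bm w)-\ln\|\bm x\|_p$, hence $\tfrac{p-1}{p}H(\bm w)\leqslant -\ln\|\bm x\|_p$, which is exactly the claim. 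Note that this is the \emph{opposite} comparison to yours: Gibbs gives $\sum_i w_i\ln x_i\leqslant -H(\bm w)$, whereas your monotonicity step gave $\sum_i w_i\ln x_i\geqslant -H(\bm w)+\ln S_p$.
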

\begin{proof}
    The definitions in \eqref{eq:w} tell us that $\bm w = [w_1, \dots, w_n]^\intercal$ is a probability vector. Hence, $H(\bm w)$ is the Shannon-entropy of $\bm w$, which we know is non-negative \cite{gallager1968information}. 
    To derive the upper bound, we use the definition of R\'enyi entropy of order $\alpha$ \cite{renyi1961measures} given below 
    \begin{gather}
        H_\alpha(\bm w) = \frac 1{1- \alpha} \ln \sum_i w_i^{\alpha} \label{eq:renyi}.
    \end{gather}
    The following two facts about $H_\alpha(\cdot)$ are well known: $H_1(\bm w) = H(\bm w)$ and
    $H_{\alpha}(\cdot)$ is a
    decreasing function of $\alpha$. 
    We now let $\alpha = \frac 1p$. Then since $\alpha < 1$ (because $p \geqslant 2$), we have 
    \begin{gather}
        H(\bm w) \leqslant H_{\frac 1p}(\bm w) =
        \frac p{p- 1} \ln \sum_{i=1}^n w_i^{\frac 1p} =
        -\frac{p}{p-1} \ln \|\bm x\|_p.
    \end{gather}
    The final equality follows from \eqref{eq:w}. 
\end{proof}

\begin{lemma} \label{lem:eq-h} 
For any integer $p \geqslant 2$ and $\bm x \in \Delta_n$, 
given the definitions in \eqref{eq:Sr}, \eqref{eq:w} and Lemma \ref{lem:entropy}, the following relationship holds 
\begin{gather*}
    \ln S_p - p \frac{S_p'}{S_p} = H(\bm w)
\end{gather*}
where $S_p'$ is the derivative of $S_p$ with respect to $p$.
\end{lemma}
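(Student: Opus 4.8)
\textbf{Proof plan for Lemma \ref{lem:eq-h}.} The plan is to verify the identity by direct differentiation of $S_p$ with respect to $p$, treating $p$ as a continuous variable. First I would compute
\[
S_p' \;=\; \frac{d}{dp}\sum_{i=1}^n x_i^p \;=\; \sum_{i=1}^n x_i^p \ln x_i,
\]
where the interchange of sum and derivative is trivial since the sum is finite. (For indices with $x_i = 0$ one adopts the usual convention $0\cdot\ln 0 = 0$, consistent with both $0^p = 0$ and the definition of $H$; such terms contribute nothing to either side and may be dropped, so one may assume $x_i > 0$ throughout.)

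Next I would rewrite $p\,S_p'/S_p$ using the weights $w_i = x_i^p/S_p$ from \eqref{eq:w}. Since $p \ln x_i = \ln x_i^p$ and $x_i^p = w_i S_p$, we get
\[
p\,\frac{S_p'}{S_p} \;=\; \sum_{i=1}^n \frac{x_i^p}{S_p}\,\ln x_i^p \;=\; \sum_{i=1}^n w_i \bigl(\ln w_i + \ln S_p\bigr) \;=\; \sum_{i=1}^n w_i \ln w_i \;+\; \ln S_p,
\]
using $\sum_i w_i = 1$ in the last step. Recognizing $\sum_i w_i \ln w_i = -H(\bm w)$ and rearranging gives
\[
\ln S_p - p\,\frac{S_p'}{S_p} \;=\; \ln S_p - \bigl(-H(\bm w) + \ln S_p\bigr) \;=\; H(\bm w),
\]
which is exactly the claimed identity.

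This lemma is a bookkeeping identity rather than a genuinely hard result, so I do not anticipate any real obstacle; the only point requiring a line of care is the treatment of zero components of $\bm x$, which is handled by the standard entropy convention, and the observation that $\bm x\in\Delta_n$ (or indeed any fixed nonzero nonnegative vector, by homogeneity) suffices since the identity does not use the normalization $\sum_i x_i = 1$ anywhere. I would present the three displays above essentially verbatim as the full proof.
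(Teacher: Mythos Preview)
Your proof is correct and follows essentially the same route as the paper: differentiate $S_p$ term-by-term, rewrite $p\,S_p'/S_p$ via the weights $w_i = x_i^p/S_p$, split $\ln(w_i S_p)$ into $\ln w_i + \ln S_p$, and identify $-\sum_i w_i\ln w_i$ as $H(\bm w)$. Your explicit remark about the $x_i=0$ convention is a small addition the paper leaves implicit.
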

\begin{proof}
    We start by computing the derivative of $S_p$ with respect to $p$ as 
    \begin{flalign*}
        S_p' & = \sum_{i=1}^n x_i^p \ln x_i = S_p \sum_{i=1}^n w_i \ln x_i \\ 
        \iff p \frac{S_p'}{S_p} & = \sum_{i=1}^n w_i \ln x_i^p = \sum_{i=1}^n w_i \ln(S_p w_i) \\ 
        \iff p \frac{S_p'}{S_p} & = \ln S_p \left( \sum_{i=1}^n w_i \right) + \sum_{i=1}^n w_i \ln w_i \\
        & = \ln S_p - H(\bm w) \\ 
        \iff H(\bm w) &= \ln S_p - r\frac{S_p'}{S_p}. 
    \end{flalign*} 
\end{proof}

\noindent We now present the proof of Proposition \ref{prop:decreasing} by performing direct sign analysis of the derivative of $F(p)$. We start by taking logarithm of $F(p)$ as 
    \begin{gather}
        \ln F(p) \triangleq g(p) = \ln(1-\|\bm x\|_p) - \ln \|\bm x\|_p - \ln D_p. \label{eq:log}
    \end{gather}
    We now let $F'(p)$, $g'(p)$, $\|\bm x\|_p'$ and $D'_p$ denote the derivative with respect to $p$ of $F(p)$, $g(p)$, $\|\bm x\|_p$ and $D_p$, respectively. Then, we can differentiate \eqref{eq:log}, we get
    \begin{flalign*}
        \frac {F'(p)}{F(p)} = g'(p) = -\frac{\|\bm x\|_p'}{1-\|\bm x\|_p} - \frac{\|\bm x\|_p'}{\|\bm x\|_p} - \frac{D'_p}{D_p} 
    \end{flalign*}
    Simplifying the above equation further, we obtain
    \begin{flalign*}
        g'(p) & = -\frac{\|\bm x\|_p'}{\|\bm x\|_p(1-\|\bm x\|_p)} - \frac{D'_p}{D_p} \\ 
        & = \frac{-1}{(1-\|\bm x\|_p)} \left[\frac 1p \frac{S_p'}{S_p} - \frac 1{p^2}\ln S_p \right] - \frac{\ln n}{p^2}\left[ 1 + \frac 1{D_p}\right]\\ 
        & = \frac{1}{r^2} \left[ \frac{\ln S_r - r \tfrac{S_r'}{S_r}}{1-\|\bm x\|_r}\right]  - \frac{\ln n}{r^2}\left[ 1 + \frac 1{D_r}\right] \\
        & = \frac{H(\bm w)}{p^2(1-\|\bm x\|_p)} - \frac{\ln n}{p^2}\left[ 1 + \frac 1{D_p}\right] ~~ \text{(using Lemma \ref{lem:eq-h})}\\ 
        & \leqslant \frac 1{p^2} \left( \frac p{p-1} \left[\frac{-\ln \|\bm x\|_p}{1-\|\bm x\|_p}\right] - \ln n\left[ 1 + \frac 1{D_p}\right]\right) \\ 
        & < 0 \quad \text{(using Lemma \ref{lem:log_bound})}.
    \end{flalign*}
    The penultimate inequality follows from Lemma \ref{lem:entropy}. We have proved that $g'(p) < 0$. But $F'(p) = F(p) \cdot g'(p)$ and since $F(p) > 0$ for any $\bm x$ that satisfies the assumptions of the proposition, we obtain $F'(p) < 0$. \hfill \qed

\end{appendices}

\begin{acknowledgements}
The authors thank  Dirk Lauinger of the MIT Energy Initiative for insightful discussions on the potential for expanding on a family of fairness models.
\end{acknowledgements}

%
\section*{Conflict of interest}
The authors declare that they have no conflict of interest.

\bibliographystyle{spmpsci}      
\bibliography{refs}   

\end{document}